\documentclass[a4paper,11pt,reqno]{amsart}
\usepackage[left=26mm,right=26mm,top=30mm,bottom=30mm]{geometry}

\usepackage{amssymb}
\usepackage[shortlabels]{enumitem}
\usepackage{bm}
\usepackage{mathtools}
\usepackage{mathrsfs}
\usepackage{array}    
\usepackage{xparse}
\usepackage[sort&compress,numbers]{natbib}
\usepackage{caption}
\captionsetup[figure]{name={Fig.},labelsep=period}
\usepackage{xcolor}
\usepackage{lineno}

\newtheorem{theorem}{Theorem}[section]
\theoremstyle{plain}
\newtheorem{definition}{Definition}[section]
\newtheorem{lemma}{Lemma}[section]
\newtheorem{proposition}{Proposition}[section]
\newtheorem{remark}{Remark}[section]
\numberwithin{equation}{section}
\def\<#1>{\mathinner{\langle#1\rangle}}

\makeatletter
\newcommand\makebig[2]{%
  \@xp\newcommand\@xp*\csname#1\endcsname{\bBigg@{#2}}%
  \@xp\newcommand\@xp*\csname#1l\endcsname{\@xp\mathopen\csname#1\endcsname}%
  \@xp\newcommand\@xp*\csname#1r\endcsname{\@xp\mathclose\csname#1\endcsname}%
}
\makeatother

\makebig{biggg} {3.0}
\makebig{Biggg} {3.5}
\makebig{bigggg}{4.0}
\makebig{Bigggg}{4.5}

%--------------------------------------------------------
\begin{document}
\title{On the exponential stability of Beck's Problem on a star-shaped graph}
\author{Mahyar Mahinzaeim$^{\tt1,\ast}$}
\author{Gen Qi Xu$^{\tt2}$}
\author{Hai E Zhang$^{\tt2}$}

 \thanks{
\vspace{-1em}\newline\noindent
{\sc MSC2020}: 37L15, 93D23, 34B45, 35P10, 47B06
\newline\noindent
{\sc Keywords}: {Beck's Problem, beam network, spectral problem on a metric star graph, spectral analysis, Riesz basisness, exponential stability}
\newline\noindent
$^{\tt1}$ Research Center for Complex Systems, Aalen University, Germany.
  \newline\noindent
 $^{\tt2}$ Department of Mathematics, Tianjin University, China.
    \newline\noindent
  {\sc Emails}:
 {\tt m.mahinzaeim@web.de},~{\tt gqxu@tju.edu.cn},~{\tt ghaiezhang@126.com}.
    \newline\noindent
$^{\ast}$ Corresponding author.
}

\begin{abstract}
 We deal with the as yet unresolved exponential stability problem for Beck's Problem on a metric star graph with three identical edges. The edges are  stretched Euler--Bernoulli beams which are simply supported with respect to the outer vertices. At the inner vertex we have viscoelastic damping acting on the slopes of the edges. We carry out a complete spectral analysis of the system operator associated with the abstract spectral problem in Hilbert space. Within this framework it is shown that the eigenvectors have the property of forming a Riesz (i.e.\ an unconditional) basis, which makes it possible to directly deduce the exponential stability of the corresponding $C_0$-semigroup using spectral information for the system operator alone. A physically interesting conclusion is that the particular choice of vertex conditions ensures the exponential stability even when the elasticity acting on the slopes of the edges is absent.
\end{abstract}
\maketitle

\pagestyle{myheadings} \thispagestyle{plain} \markboth{\sc M.\ Mahinzaeim, G.\ Q.\ Xu, and H.\ E.\ Zhang}{\sc Beck's Problem on a star graph}

\section{Introduction}\label{sec_intro}

The small motions of a long, thin, uniform Euler--Bernoulli beam of finite length which is subjected to an external axial force negatively proportional to the bending moment can be modelled by the partial differential equation
\begin{equation}\label{eq_01}
\frac{\partial^2}{\partial t^2}\bm{w}\left(s,t\right)-\gamma\frac{\partial^2}{\partial s^2}\bm{w}\left(s,t\right)+\frac{\partial^4}{\partial s^4}\bm{w}\left(s,t\right)=0.
\end{equation}
Here $\bm{w}\left(s,t\right)$ is the deflection of the beam at position $s$ and time $t$. In the case where the beam is compressed one has $\gamma<0$, otherwise, when $\gamma>0$, the beam is stretched. By supplementing \eqref{eq_01} with the standard boundary conditions of clamped-free ends, one gets the physical problem known as \textit{Beck's Problem}, which is one of the oldest problems in the theory of elastic stability (see \cite{Beck1952,Bolotin1963,Ziegler1977}). An analysis of the existence, location, multiplicity and asymptotics of the eigenvalues, i.e., a complete spectral analysis of the boundary-eigenvalue problem associated with \eqref{eq_01}, subject to damped, simply supported (pinned) boundary conditions has been given in \cite{MollerPivovarchik2006} in the spatially variable compression/tension case.

Our primary interest in this paper lies with the dynamic behaviour and stability of a star-shaped network of such beams, with a dampening effect at the junction point of the network. This is an interesting problem which should find application in a varied assortment of mechanical and control problems on networks. Beck's Problem, of course, is nothing new by itself, but the problem we consider is an interpretation as a network setup of Beck's Problem with damped simple-elastic ends. Even in this comparatively simple case (the star-shaped network is an elementary example of a nontrivial network) we are not aware that the stability problem, let alone the exponential stability problem has been addressed previously. From the physical viewpoint the natural question the engineer will therefore ask (but which so far has escaped attention in the mathematical literature) is this: \textit{Do the connectivity conditions coupling the beams cause all motions of the network to die out, possibly exponentially, as time progresses?}

To explain our problem in a bit more detail, let us consider first \eqref{eq_01} on each beam or what we call edge $e$:
\begin{equation*}
\frac{\partial^2}{\partial t^2}\bm{w}_e\left(s,t\right)-\gamma\frac{\partial^2}{\partial s^2}\bm{w}_e\left(s,t\right)+\frac{\partial^4}{\partial s^4}\bm{w}_e\left(s,t\right)=0,\quad s\in e.
\end{equation*}
The existence and properties of solutions $\bm{w}_e$ also depend on initial conditions, boundary conditions at the outer vertices of the network, and certain connectivity conditions at the inner vertex. We are going to consider an equilateral $3$-edge metric star graph, i.e.\ a metric star graph formed by three identical edges $e_1$, $e_2$, $e_3$, as illustrated in Fig.\ \ref{fig01}, where each edge is of the same finite length and there is a single inner vertex at $a_0$. The boundary conditions at the outer vertices $a_1$, $a_2$, $a_3$ correspond to the edges being simply supported at one end. At the inner vertex we have a combination of elasticity and viscous damping, or \textit{viscoelastic damping} acting on the angles or slopes of the edges. It turns out that the question of stability essentially concerns (not so surprisingly) the viscous damping condition at the inner vertex but (surprisingly) not the elasticity condition, as long as $\gamma> 0$.

\begin{figure}[!h]
\begin{center}
\includegraphics[width=0.45\textwidth]{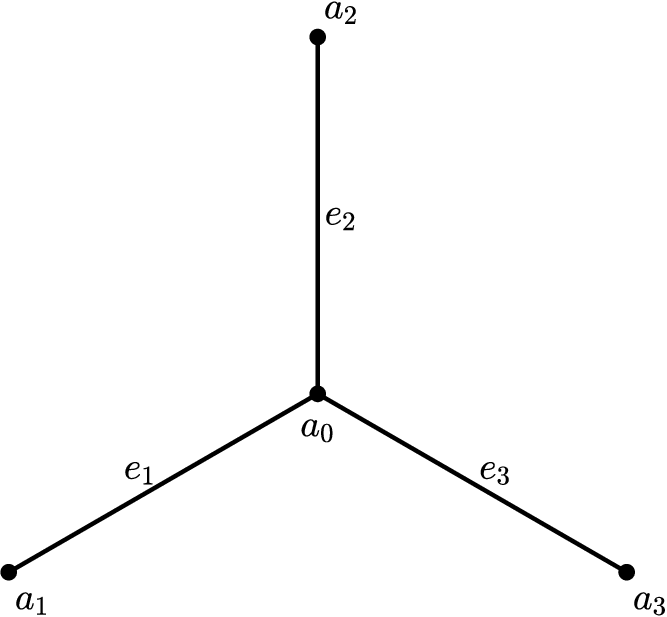}
\end{center}
\caption{An equilateral $3$-edge metric star graph.}\label{fig01}
\end{figure}

The problem with the system considered, and in fact with all continuum mechanical or distributed systems governed by partial differential equations, is that the spectral mapping theorem in general need not hold. This difficulty prevents use of only spectral information for the \textit{system operator} (or, equivalently, for the corresponding spectral problem) to study the exponential stability of the corresponding {strongly continuous} semigroup for an abstract or operator formalism in \textit{infinite-dimensional} {state space}. By exponential stability of the semigroup we mean that the norms of solutions of the corresponding first-order initial-value problem decay exponentially to zero as time goes to infinity; a formal definition will be given later. So it is clear from what has just been said that the mathematical question directly related to the question in the second paragraph then arises: \textit{Does the spectrum of the system operator determine the decay rate of the semigroup?}

As indicated above, the key to our approach to the stability problem is the abstract formulation in terms of operators. Let ${S}\left(t\right)$ be a strongly continuous semigroup of bounded linear operators, in short $C_0$-semigroup, on the Hilbert space $\mathbb{X}$ with infinitesimal generator $\mathcal{T}$ (a closed linear operator with domain $\bm{D}\left(\mathcal{T}\right)$ dense in $\mathbb{X}$). The operator $\mathcal{T}$ is the system operator in our contexts. In general, then, one always has by the Hille--Yoshida theorem that
\begin{equation}\label{eqwwr456}
\omega\left(\mathcal{T}\right)\geq s\left(\mathcal{T}\right),
\end{equation}
where $\omega\left(\mathcal{T}\right)\coloneqq\lim_{t\rightarrow\infty}\frac{1}{t}\log\left\|{S}\left(t\right)\right\|$ is the type, $s\left(\mathcal{T}\right)\coloneqq \sup\left\{\operatorname{Re}{\lambda}~\middle|~\lambda\in\sigma\left(\mathcal{T}\right)\right\}$ is the spectral abscissa, and $\sigma\left(\mathcal{T}\right)$ denotes the spectrum of $\mathcal{T}$. If equality holds in \eqref{eqwwr456}, then, as is well known, $\mathcal{T}$ satisfies the \textit{spectrum-determined growth assumption}. If
\begin{equation}\label{eqwwr457}
\sup\left\{\operatorname{Re}{\lambda}~\middle|~\lambda\in\sigma\left(\mathcal{T}\right)\right\}\leq -\varepsilon<0
\end{equation}
for some constant $\varepsilon> 0$, then the exponential stability of $S\left(t\right)$ is a
direct consequence of the spectrum-determined growth assumption. Hence, in this case, $\varepsilon$ is the decay rate and is intimately related to the spectral abscissa. Let us note that even if we can show that $\mathcal{T}$ satisfies the spectrum-determined growth assumption, we must still face the problem of analysing and hence estimating its spectrum, which is a good deal more complicated given the network nature of the system. This means, effectively, that the object of the paper is twofold: prove that we have equality in \eqref{eqwwr456} and, in particular, provide accurate estimates of the location and asymptotics of the spectrum of $\mathcal{T}$ thereby show that condition \eqref{eqwwr457} holds. This is the main undertaking of the paper and it is hoped that the methods we employ, and the results we obtain, will serve as a guide for general star graphs of beams. (The reader will undoubtedly notice that our results may in fact be extended, without additional effort, to graphs of more general topology. The symmetry of the graph about the vertices does probably play a role, however, when it comes to spectral analyses.)

Our results are naturally obtained in the context of the classical spectral problem
\begin{equation}\label{eq_01xaa02}
\mathcal{T}x=\lambda x,\quad x\in\bm{D}\left(\mathcal{T}\right)\subset \mathbb{X},\quad \lambda\in\mathbf{C},
\end{equation}
for the root vectors $x$ (eigen- and associated vectors or chains of eigen- and associated vectors). Here $\lambda$ is the spectral parameter, and it is important to note that the system operator $\mathcal{T}$ is not skewadjoint in general. We seek to prove the spectrum-determined growth assumption by proving that $\mathcal{T}$ is a discrete operator (employing some old terminology of Dunford, see \cite[Definition XIX.1]{DunfordSchwartz1971}) -- i.e.\ it has a compact resolvent and thus a discrete spectrum consisting only of normal eigenvalues (isolated eigenvalues of finite algebraic multiplicity, see Definition \ref{d05xa}) and its root vectors form a \textit{Riesz basis} for the underlying Hilbert space $\mathbb{X}$. A bonus of using this ``spectral approach'' is that the Riesz basis property or Riesz basisness of the root vectors also gives the semigroup generating property and guarantees the representation of solutions of the initial-value problem as norm-convergent series of eigensolutions. All these considerations already show that in effect we reduce the study of exponential stability to using only spectral information for the system operator, as one would do in lumped parameter systems modelled by ordinary differential equations (cf., e.g., \cite{Bolotin1963}).

The paper is organised as follows. In Section \ref{sec_3} we lay out some preliminary material for use in the paper, pose the initial/boundary-value problem, and formulate the abstract first-order initial-value problem associated with it. Function spaces and operators basic to the abstract formulation are defined in the section. We will use some terminology common in the literature of differential equations on networks or on metric graphs, and we recommend the reader consult the papers \cite{MR2070600,MR2070601} for definition of the terms and development of the related theory (from a somewhat historical perspective). We also refer the interested reader to the text \cite{Mugnolo2014} where a good treatment of the semigroup formulation of such systems can be found. Some stability (and stability-related or control) problems for such systems in the context of string, beam, and plate networks are well covered in the monographs \cite{DagerZuazua2006,LagneseEtAl1994,MR4397494,Xu2010graph}. We should also note the very readable monograph \cite{MollerPivovarchik2015} in which the authors present a variety of special examples from mechanics of metric graphs. In Section \ref{sec_3a} we examine the well-posedness of the initial-value problem, which requires study of the semigroup generating property of the system operator $\mathcal{T}$. The heart of this work -- also from a technical standpoint -- lies in Sections \ref{sec_3ax} and \ref{sec_6}, where we establish the key results required for the proof of exponential stability in Section \ref{sec_7}. While Section \ref{sec_3ax} is devoted to a complete spectral analysis of $\mathcal{T}$, in Section \ref{sec_6} the completeness, minimality, and Riesz basisness of the corresponding root vectors (all of which shown in Theorem \ref{T-5-3} to be eigenvectors) are investigated with the aid especially of two results due respectively to Keldysh \cite[Theorem V.8.1]{GohbergKrein1969} and Xu and Yung \cite[Theorem 1.1]{xu2005expansion}. The former is a well-known result on the completeness of root vectors of compact operators on a Hilbert space which are subject to certain nuclearity conditions. The latter is on the Riesz basisness (with parentheses) and is particularly well suited for our purposes because it exploits the semigroup description of our physical model; the result may, in fact, be compared with comparable results in \cite{MR2718703,MR2407202,MR3340175}. We refer the reader to the standard texts \cite{GohbergKrein1969,Markus1988,Lyubich1992} for definitions of completeness, minimality, unconditional or Riesz bases, and so on (and further details concerning various tests for these).

\section{Problem formulation and preliminaries}\label{sec_3}

As mentioned in the Introduction, our results will be obtained in the framework of abstract function spaces. We begin, however, with a detailed description of the physical model.

\subsection{The physical model}
We define the graph $\mathscr{G}\coloneqq\left(\mathscr{V},\mathscr{E}\right)$ with
\begin{equation*}
\mathscr{V}=\left\{a_0\right\}\cup\left\{ a_j\right\}_{j=1}^3,\quad \mathscr{E}=\left\{e_j\right\}_{j=1}^3,
\end{equation*}
the vertex and edge sets, respectively. Each of the edges $e_j$ connecting the inner vertex $a_0$ to the outer vertices $a_j$ is of unit length and is identified with the interval $s_j\in\left[0,1\right]$, $j=1,2,3$. The values  $s_j =0$ and $s_j =1$ correspond to the inner and outer vertices, respectively. The graph $\mathscr{G}$ thus defined is a metric graph.

Letting $\bm{w}_j\left(s_j,t\right)$ be the deflections of the edges $e_j$ for $s_j\in\left[0,1\right]$ at time $t\ge 0$ from their equilibrium positions which we identify with $\mathcal{G}$, we suppose they satisfy the partial differential equation
\begin{equation}\label{eq_02}
\frac{\partial^2}{\partial t^2}\bm{w}_j\left(s_j,t\right)-\gamma \frac{\partial^2}{\partial s_j ^2}\bm{w}_j\left(s_j,t\right)+\frac{\partial^4}{\partial s_j ^4}\bm{w}_j\left(s_j,t\right)=0,\quad j=1,2,3.
\end{equation}
Associated with \eqref{eq_02} we have given initial conditions
\begin{equation}\label{eqnew21}
\bm{w}_j\left(s_j,0\right)=g_j\left(s_j\right),\quad \left.\frac{\partial}{\partial t}\bm{w}_j\left(s_j,t\right)\right|_{t=0}=h_j\left(s_j\right),\quad s_j\in e_j,\quad j=1,2,3,
\end{equation}
where the known functions $g_j$, $h_j$ are ``suitably'' smooth as specified later. The edges are assumed simply supported at the outer vertices $a_j$, so the boundary conditions at $s_j =1$ are
\begin{equation}\label{eq_03}
\bm{w}_j\left(1,t\right)=\left.\frac{\partial^2}{\partial s_j ^2}\bm{w}_j\left(s_j,t\right)\right|_{s_j =1}=0,\quad j=1,2,3.
\end{equation}
At the inner vertex various connectivity conditions may be considered; we impose the following:
\begin{itemize}[leftmargin=*,align=left,labelwidth=\parindent]
\item\label{item01} {Deflection compatibility condition:}
\begin{equation}\label{eq_04}
\bm{w}_i\left(0,t\right)=\bm{w}_j\left(0,t\right),\quad i,j=1,2,3.
\end{equation}
\item\label{item02} {Viscoelastic damping condition:}
\begin{equation}\label{eq_05}
\left.\left(\frac{\partial^2}{\partial s_j ^2}\bm{w}_j\left(s_j,t\right)-\alpha\frac{\partial}{\partial s_j }\bm{w}_j\left(s_j,t\right)-\beta\frac{\partial^2}{\partial s_j \partial t}\bm{w}_j\left(s_j,t\right)\right)\right|_{s_j =0}=0,\quad j=1,2,3.
\end{equation}
\item\label{item03} {Force balance condition:}
\begin{equation}\label{eq_06}
\sum^3_{j=1}\left.\left(\frac{\partial^3}{\partial s_j ^3}\bm{w}_j\left(s_j,t\right)-\gamma \frac{\partial}{\partial s_j }\bm{w}_j\left(s_j,t\right)\right)\right|_{s_j =0}=0.
\end{equation}
\end{itemize}

These conditions seem natural. Condition \eqref{eq_04} ensures the continuity or compatibility of the deflections of the edges across the inner vertex, for otherwise the edges would not be joined to the network. Condition \eqref{eq_05} recognises that the bending moments at the inner vertex are proportional to linear combinations of the slopes and the angular velocities at the inner vertex. It corresponds, more concretely, to the slopes at the vertex being acted on by elastic and viscous damping effects which are proportional, respectively, to $\alpha$ and $\beta$. Condition \eqref{eq_06} is important in that it balances the internal forces acting on the inner vertex, assuming that the edges undergo ``nonfollowing'' tension or compression proportional to $\gamma$. By this is meant that the external axial forces remain constant in direction along the axes of the edges, projected in the direction of the shear forces. It is crucial to understand that this assumption, although physically reasonable, makes the originally nonconservative Beck's Problem conservative in the absence of dissipative effects (i.e.\ when $\beta=0$). Strictly speaking then, Beck's Problem specialises to the classical problem of Euler, but we shall not stress this distinction between the two problems in the paper (see the books \cite{Bolotin1963,Ziegler1977} and the survey \cite{Elishakoff2005} for further discussion of this point). Throughout the paper we will make the assumption that the edges are stretched for fixed $\gamma >0$ and let the parameters $\alpha$, $\beta$ vary. (We expect our techniques to apply similarly to the compression modification, i.e.\ when $\gamma<0$.)

The problem posed by \eqref{eq_02}--\eqref{eq_06} constitutes the initial/boundary-value problem which, we should stress, is considerably more complicated and does not fit precisely into the framework considered, e.g., in \cite{Xu2012} or \cite[Section 4.2]{Xu2010graph} (as well as in the references contained therein). So in what follows we suggest an alternative approach to abstracting the initial/boundary-value problem and to verification of the necessary theoretical concepts.

\subsection{Operator formulation}
Let us begin by putting $s$ in place of the variable $s_j\in\left[0,1\right]$, and set
\begin{equation*}
\bm{v}_j\left(s,t\right)=\frac{\partial}{\partial t}\bm{w}_j\left(s,t\right),\quad \bm{x}_j\left(s,t\right)=\left(\begin{matrix}
\bm{w}_j\left(s,t\right)\\[0.2em]
\bm{v}_j\left(s,t\right)
\end{matrix}\right),\quad j=1,2,3,
\end{equation*}
and
\begin{equation*}
\bm{x}\left(s,t\right)=\left(\begin{matrix}
\bm{x}_1\left(s,t\right)\\[0.2em]
\bm{x}_2\left(s,t\right)\\[0.2em]
\bm{x}_3\left(s,t\right)
\end{matrix}\right).
\end{equation*}
Denoting by $\bm{H}^m\left(0,1\right)$, $m\in\mathbf{N}_0\left(=\mathbf{N}\cup\left\{0\right\}\right)$, the Sobolev space with $\bm{L}_2\left(0,1\right)\coloneqq\bm{H}^0\left(0,1\right)$, we define the space
\begin{equation*}
\hat{\bm{H}}^2\left(0,1\right)\coloneqq\left\{w\in \bm{H}^2\left(0,1\right)~\middle|~w\left(1\right)=0\right\}
\end{equation*}
which is a closed subspace of $\bm{H}^2\left(0,1\right)$. With the inner product
\begin{equation*}
\left(w,\tilde{w}\right)=\int^1_0w''\left(s\right)\overline{\tilde{w}''\left(s\right)}\,ds+\alpha w'\left(0\right)\overline{\tilde{w}'\left(0\right)}
\end{equation*}
the space $\hat{\bm{H}}^2\left(0,1\right)$ is a Hilbert space for $\alpha>0$. To proceed further it is necessary to define the metric spaces of three-component vectors in the following way:
\begin{equation*}
\bm{L}_2\left(\mathscr{G}\right)\coloneqq\left\{v=\left(\begin{matrix}
v_1\\[0.2em]
v_2\\[0.2em]
v_3
\end{matrix}\right)~\middle|~ v_j\in \bm{L}_2\left(0,1\right),\quad j=1,2,3\right\}
\end{equation*}
and
\begin{equation*}
\hat{\bm{H}}^2\left(\mathscr{G}\right)\coloneqq\left\{
w=\left(\begin{matrix}
w_1\\[0.2em]
w_2\\[0.2em]
w_3
\end{matrix}\right)~\middle|~\begin{gathered}
w_j\in \hat{\bm{H}}^2\left(0,1\right),\\[0.2em]
w_i\left(0\right)=w_j\left(0\right),\quad i,j=1,2,3
\end{gathered}
\right\}.
\end{equation*}
With these we define by
\begin{equation}\label{sspace45hgt}
\mathbb{X}= \hat{\bm{H}}^2\left(\mathscr{G}\right)\times \bm{L}_2\left(\mathscr{G}\right)\coloneqq\left\{
x=\left\{x_j\right\}^3_{j=1}~\middle|~\begin{gathered}
x_j=\left(\begin{matrix}
w_j\\[0.2em]
v_j
\end{matrix}\right)\in \hat{\bm{H}}^2\left(0,1\right)\times \bm{L}_2\left(0,1\right),\\[0.2em]
w_i\left(0\right)=w_j\left(0\right),\quad i,j=1,2,3
\end{gathered}
\right\}
\end{equation}
the state space with the inner product
\begin{equation*}
\<x,\tilde{x}> \coloneqq\left(w,\tilde{w}\right)_{2}+\left(w,\tilde{w}\right)_1+\left(v,\tilde{v}\right)_0,
\end{equation*}
where
\begin{gather*}
\left(w,\tilde{w}\right)_{2}=\sum_{j=1}^3\left(\int^1_0w_j''\left(s\right)\overline{\tilde{w}_j''\left(s\right)}\,ds+\alpha w'_j\left(0\right)\overline{\tilde{w}'_j\left(0\right)}\right),\quad
\left(w,\tilde{w}\right)_1=\gamma\sum_{j=1}^3 \int^1_0w_j'\left(s\right)\overline{\tilde{w}_j'\left(s\right)}\,ds,\\[0.2em]
\left(v,\tilde{v}\right)_0=\sum_{j=1}^3\int^1_0v_j\left(s\right)\overline{\tilde{v}_j\left(s\right)}\,ds.
\end{gather*}
The resulting norm in $\mathbb{X}$ will be denoted by $\left\|\,\cdot\,\right\| $, as usual.

In $\mathbb{X}$ we define the system operator $\mathcal{T}$ on the domain
\begin{equation}\label{eq_08}
\bm{D}\left(\mathcal{T}\right)=\left\{x=\left\{x_j\right\}^3_{j=1}\in \mathbb{X}~\middle|
~\begin{gathered}
x_j=\left(\begin{matrix}
w_j\\[0.2em]
v_j
\end{matrix}\right)\in (\bm{H}^4\left(0,1\right)\cap \hat{\bm{H}}^2\left(0,1\right))\times \hat{\bm{H}}^2\left(0,1\right), \\[0.2em]
w''_j\left(1\right)=0,\quad w''_j\left(0\right)-\alpha w'_j\left(0\right)-\beta v'_j\left(0\right)=0,\quad j=1,2,3,\\
\sum_{j=1}^3\,(w^{(3)}_j\left(0\right)-\gamma  w'_j\left(0\right))=0
\end{gathered}\right\}
\end{equation}
by 
\begin{equation}\label{eq_07}
\mathcal{T}{x}\coloneqq \left\{\left(\begin{matrix}
v_j\\[0.2em]
 -w_j^{(4)}+\gamma w''_j
\end{matrix}\right)
\right\}_{j=1}^3.
\end{equation}
Then we can rewrite the initial/boundary-value problem described by \eqref{eq_02}--\eqref{eq_06} as an abstract first-order initial-value problem in $\mathbb{X}$ in the form
\begin{equation}\label{eq_09}
\left\{\begin{gathered}
\dot{x}\left(t\right)=\mathcal{T}{x}\left(t\right),\quad {x}\left(t\right)=\bm{x}\left(\,\cdot\,,t\right)=\left\{\left(\begin{matrix}
\bm{w}_j\left(\,\cdot\,,t\right)\\[0.2em]
\bm{v}_j\left(\,\cdot\,,t\right)
\end{matrix}\right)\right\}^3_{j=1},\\ {x}\left(0\right)=x_0=\left\{\left(\begin{matrix}
g_j\\[0.2em]
h_j
\end{matrix}\right)\right\}^3_{j=1}.
\end{gathered}\right.
\end{equation}
The initial state $x_0$ is assumed to be suitably smooth in the sense that $x_0\in\bm{D}\left(\mathcal{T}\right)$. We recall two basic notions from semigroup theory:
\begin{itemize}[leftmargin=*,align=left,labelwidth=\parindent]
\item Let ${x}\left(t\right)={S}\left(t\right)x_0$, continuously differentiable and satisfying \eqref{eq_09} in $\mathbb{X}$ for all $t\in\mathbf{R}_+$, where ${S}\left(t\right)$ is a $C_0$-semigroup on $\mathbb{X}$ with infinitesimal generator $\mathcal{T}$. Then ${x}\left(t\right)$ is a \textit{classical} solution, or simply a solution of \eqref{eq_09}.
\item The $C_0$-semigroup generated by $\mathcal{T}$ is said to be \textit{exponentially} stable if and only if there are constants $M,\varepsilon>0$ such that
\begin{equation}\label{eqnewa1}
\left\|{S}\left(t\right)\right\| \leq M e^{-\varepsilon t},\quad t\ge 0.
\end{equation}
\end{itemize}

The goal in the paper now clearly becomes one of proving that for solutions of \eqref{eq_09} we have \eqref{eqnewa1}. For this, according to our programme stated in the Introduction, we must check whether equality in \eqref{eqwwr456} holds and condition \eqref{eqwwr457} is satisfied. We must first however check to see if \eqref{eq_09} is well posed.

\section{Well-posedness}\label{sec_3a}

The next two lemmas deal with properties of $\mathcal{T}$ defined by \eqref{eq_08}, \eqref{eq_07} and will be also of importance later in the proofs of the main theorems. We recall that, by definition,
\begin{equation*}
\alpha,\beta\geq 0,\quad \gamma>0
\end{equation*}
throughout the whole paper.
\begin{lemma}\label{L-3-1}
$\mathcal{T}$ has a compact inverse.
\end{lemma}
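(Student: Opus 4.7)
The plan is to prove the assertion in three stages: injectivity, surjectivity, and compactness of $T^{-1}$ on $\mathbb{X}$.

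For injectivity, I would suppose $x = \{(w_j,v_j)\}_{j=1}^3 \in \bm{D}(T)$ satisfies $Tx = 0$. This immediately gives $v_j \equiv 0$ and $w_j^{(4)} = \gamma w_j''$ on each edge. Testing the equation against $\bar w_j$ and integrating by parts twice, then summing over $j$, the boundary terms at $s=1$ vanish because $w_j(1) = w_j''(1) = 0$; at $s=0$ the force-balance condition annihilates the contribution $\sum_j (w_j^{(3)}(0) - \gamma w_j'(0))\bar w_0$ (where $w_0$ is the common value at the inner vertex), and the elasticity/friction condition with $v_j=0$ reduces $w_j''(0)\bar w_j'(0)$ to $\alpha|w_j'(0)|^2$. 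This yields the identity
\begin{equation*}
\sum_{j=1}^3 \int_0^1 \bigl(|w_j''|^2 + \gamma|w_j'|^2\bigr)\,ds + \alpha \sum_{j=1}^3 |w_j'(0)|^2 = 0.
\end{equation*}
Since $\gamma > 0$ and $\alpha \geq 0$, each $w_j$ is constant on $[0,1]$, and then $w_j(1) = 0$ forces $w_j \equiv 0$.

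For surjectivity, given $y = \{(f_j,g_j)\}_{j=1}^3 \in \mathbb{X}$, I set $v_j \coloneqq f_j \in \hat{\bm{W}}^2_2(0,1)$, which is exactly the second-component regularity demanded by $\bm{D}(T)$. The remaining task is to solve the stationary boundary-value problem $w_j^{(4)} - \gamma w_j'' = -g_j$ on each edge, subject to the conditions listed in \eqref{eq_08}, with the inhomogeneous datum $\beta f_j'(0)$ entering the elasticity/friction condition. I would handle this variationally. Introduce the sesquilinear form
\begin{equation*}
a(w,\tilde w) \coloneqq \sum_{j=1}^3 \int_0^1 \bigl(w_j'' \overline{\tilde w_j''} + \gamma w_j' \overline{\tilde w_j'}\bigr)\,ds + \alpha \sum_{j=1}^3 w_j'(0)\overline{\tilde w_j'(0)}
\end{equation*}
on $\hat{\bm{W}}^2_2(G)$. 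Because $\gamma > 0$, the form is continuous and coercive relative to the natural $\hat{\bm{W}}^2_2(G)$-norm, so the Lax--Milgram theorem supplies a unique $w \in \hat{\bm{W}}^2_2(G)$ satisfying $a(w,\tilde w) = -\sum_j \int_0^1 g_j \overline{\tilde w_j}\,ds - \beta \sum_j f_j'(0)\overline{\tilde w_j'(0)}$ for all $\tilde w \in \hat{\bm{W}}^2_2(G)$. Localising to test functions supported inside each edge shows $w_j$ solves the fourth-order ODE distributionally, and then interior regularity promotes $w_j$ to $\bm{W}^4_2(0,1)$. Finally, integrating by parts in the variational identity and choosing test vectors that separately vary $\tilde w_j'(0)$, $\tilde w_j(0)$ (via the common vertex value), and $\tilde w_j'(1)$ allows me to recover, in turn, the conditions $w_j''(1)=0$, $w_j''(0) - \alpha w_j'(0) - \beta f_j'(0) = 0$, and $\sum_j (w_j^{(3)}(0) - \gamma w_j'(0)) = 0$. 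Thus $x \coloneqq \{(w_j,v_j)\}_{j=1}^3 \in \bm{D}(T)$ with $Tx = y$.

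For compactness of $T^{-1}$, I note that by construction $T^{-1}$ maps $\mathbb{X}$ continuously into the graph-norm space $\bm{D}(T)$, whose first component lies in $\bm{W}^4_2(0,1) \cap \hat{\bm{W}}^2_2(0,1)$ and whose second lies in $\hat{\bm{W}}^2_2(0,1)$ on each edge. The Rellich--Kondrachov theorem on the bounded interval $[0,1]$ makes the embeddings $\bm{W}^4_2(0,1) \hookrightarrow \bm{W}^2_2(0,1)$ and $\bm{W}^2_2(0,1) \hookrightarrow \bm{L}_2(0,1)$ compact, so the inclusion $\bm{D}(T) \hookrightarrow \mathbb{X}$ is compact componentwise, and hence as a whole. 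Composing with the bounded operator $T^{-1}\colon \mathbb{X}\to \bm{D}(T)$ yields the desired compactness of $T^{-1}$ as an operator on $\mathbb{X}$. The principal technical hurdle is the surjectivity step, specifically verifying that all nonstandard natural boundary and vertex-coupling conditions are correctly encoded in the variational formulation and recovered in the strong sense after elliptic regularity; the injectivity and compactness steps are essentially standard once the energy identity and Sobolev embeddings are in place.
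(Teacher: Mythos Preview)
Your argument is correct, and it takes a genuinely different route from the paper's. The paper establishes bijectivity of $T$ by solving $Tx=\tilde{x}$ explicitly: it integrates the fourth-order ODE on each edge, expresses the general solution in terms of hyperbolic functions, and reduces the boundary and connectivity constraints to a $2\times 2$ linear algebraic system whose determinant it checks is nonzero. Compactness is then deduced, as in your proof, from the Sobolev embeddings.

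Your approach replaces this explicit computation with an energy identity for injectivity and a Lax--Milgram argument for surjectivity. The coercivity of your form $a$ on $\hat{\bm{W}}^2_2(G)$ is exactly where $\gamma>0$ enters (matching the paper's remark that $\gamma>0$ cannot be relaxed), and your recovery of the natural conditions $w_j''(1)=0$, $w_j''(0)-\alpha w_j'(0)-\beta f_j'(0)=0$, and $\sum_j(w_j^{(3)}(0)-\gamma w_j'(0))=0$ from the variational identity is valid because in $\hat{\bm{W}}^2_2(G)$ one can vary $\tilde w_j'(1)$, each $\tilde w_j'(0)$, and the common vertex value $\tilde w_0$ independently. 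The trade-off is this: your argument is shorter, more conceptual, and would extend without change to more edges or to variable coefficients; the paper's explicit formula for $T^{-1}$, on the other hand, is not wasted effort, since it is reused verbatim in Theorem~\ref{L-6-1} to exhibit the decomposition $T^{-1}=T_0^{-1}+\beta S$ with $S$ of finite rank, which feeds into the completeness proof. If you adopt the variational route here, you would need a separate argument (or a short explicit calculation) at that later point.
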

\begin{proof}
We consider the equation
\begin{equation}\label{eq12xa34}
\mathcal{T}x=\tilde{x}
\end{equation}
with $\tilde{x}\in \mathbb{X}$, $x\in \bm{D}\left(\mathcal{T}\right)$, which is equivalent to
\begin{equation}\label{eq12ssaaaxa34}
\left\{\begin{aligned}
v_j&=\tilde{w}_j,& j&=1,2,3,\\[0.2em]
-w^{(4)}_j+\gamma w''_j&=\tilde{v}_j,& j&=1,2,3,\\[0.2em]
w_j\left(1\right)=w''_j\left(1\right)&=0,& j&=1,2,3,\\[0.2em]
w_i\left(0\right)&=w_j\left(0\right),& i,j&=1,2,3,\\[0.2em]
w''_j\left(0\right)-\alpha w'_j\left(0\right)-\beta v'_j\left(0\right)&=0,& j&=1,2,3,\\[0.2em]
\sum^3_{j=1}\,(w^{(3)}_j\left(0\right)-\gamma  w'_j\left(0\right))&=0.&&&
\end{aligned}\right.
\end{equation}
Integrating the differential equations in \eqref{eq12ssaaaxa34} twice from $0$ to $1$, making use of the boundary conditions $w_j\left(1\right)=w''_j\left(1\right)=0$, $j=1,2,3$, we get
\begin{equation}\label{eq233xx}
w''_j\left(s\right)-\gamma w_j\left(s\right)+(w^{(3)}_j\left(0\right)-\gamma w_j'\left(0\right))\left(1-s\right)=-\tilde{V}_j\left(s\right),\quad j=1,2,3,
\end{equation}
with the integral terms
\begin{equation*}
\tilde{V}_j\left(s\right)=-\int^1_sdt\int^t_0\tilde{v}_j\left(r\right)dr.
\end{equation*}
The solutions of \eqref{eq233xx} can be directly computed to give
\begin{equation}\label{eq233}
\begin{split}
w_j\left(s\right)&=a_j\sinh\sqrt{\gamma}\left(1-s\right)+(w^{(3)}_j\left(0\right)-\gamma w_j'\left(0\right))\,\frac{1}{\sqrt{\gamma}}\int^1_s\left(1-r\right)\sinh\sqrt{\gamma}\left(s-r\right)dr\\
&\qquad +\frac{1}{\sqrt{\gamma}}\int^1_s\sinh\sqrt{\gamma}\left(s-r\right)\tilde{V}_j\left(r\right)dr,\quad j=1,2,3,
\end{split}
\end{equation}
where $a_j$ are arbitrary constants, and we note that
\begin{equation}\label{eq233xxy}
\begin{split}
w'_j\left(s\right)&=-a_j\sqrt{\gamma}\cosh\sqrt{\gamma}\left(1-s\right)+(w^{(3)}_j\left(0\right)-\gamma w_j'\left(0\right))\int^1_s\left(1-r\right)\cosh\sqrt{\gamma}\left(s-r\right)dr\\
&\qquad +\int^1_s\cosh\sqrt{\gamma}\left(s-r\right)\tilde{V}_j\left(r\right)dr,\quad j=1,2,3.
\end{split}
\end{equation}
Using the vertex condition $\sum^3_{j=1}\,(w^{(3)}_j\left(0\right)-\gamma  w'_j\left(0\right))=0$ in \eqref{eq233} we obtain
\begin{equation}\label{eq13a}
\sum^3_{j=1}w_j\left(0\right)=\sinh\sqrt{\gamma}\sum^3_{j=1}a_j-\frac{1}{\sqrt{\gamma}}\int^1_0\sinh\sqrt{\gamma}\,r\sum^3_{j=1}\tilde{V}_j\left(r\right)dr.
\end{equation}
Substituting \eqref{eq233} in \eqref{eq233xx} and using the result together with \eqref{eq233xxy} in the vertex condition $w''_j\left(0\right)-\alpha w'_j\left(0\right)-\beta v'_j\left(0\right)=0$, $j=1,2,3$, we have that
\begin{equation}
\begin{split}
&a_j\left(\gamma\sinh\sqrt{\gamma}+\alpha\sqrt{\gamma}\cosh\sqrt{\gamma}\right)\\
&\hspace{0.1\linewidth}-(w^{(3)}_j\left(0\right)-\gamma  w'_j\left(0\right))\left[\int^1_0\left(1-r\right)\left(\sqrt{\gamma}\sinh\sqrt{\gamma}\,r+\alpha\cosh\sqrt{\gamma}\,r\right)dr +1\right]\\
&\hspace{0.1\linewidth}-\tilde{V}_j\left(0\right)-\beta \tilde{w}'_j\left(0\right)-\int^1_0\left(\sqrt{\gamma}\sinh\sqrt{\gamma}\,r+\alpha\cosh\sqrt{\gamma}\,r\right)\tilde{V}_j\left(r\right)dr=0,\quad j=1,2,3,\label{eq1aa3a}
\end{split}
\end{equation}
where we have taken into account that $v_j=\widetilde{w}_j$, $j=1,2,3$. Summing \eqref{eq1aa3a} over $j=1,2,3$ we find that
\begin{equation}\label{eq12a}
\sum^3_{j=1} a_j=\frac{\sum^3_{j=1}\left[\tilde{V}_j\left(0\right)+\beta \tilde{w}'_j\left(0\right)+\int^1_0\left(\sqrt{\gamma}\sinh\sqrt{\gamma}\,r+\alpha\cosh\sqrt{\gamma}\,r\right)\tilde{V}_j\left(r\right)dr\right]}{\gamma\sinh\sqrt{\gamma}+\alpha\sqrt{\gamma}\cosh\sqrt{\gamma}}\coloneqq b\left(\tilde{w},\tilde{v}\right).
\end{equation}
Then from the compatibility condition $w_i\left(0\right)=w_j\left(0\right)\equiv w\left(0\right)$, $i,j=1,2,3$, and using \eqref{eq12a} in \eqref{eq13a} we have
\begin{equation}\label{eqne03}
w\left(0\right)=\frac{1}{3}\left(b\left(\tilde{w},\tilde{v}\right)\sinh\sqrt{\gamma}-\frac{1}{\sqrt{\gamma}}\int^1_0\sinh\sqrt{\gamma}\,r\sum^3_{j=1}\tilde{V}_j\left(r\right)dr\right)\coloneqq c\left(\tilde{w},\tilde{v}\right).
\end{equation}
Combining \eqref{eq233} and \eqref{eq1aa3a}, taking into account \eqref{eqne03}, we arrive at the algebraic equations
\begin{equation}\label{eq_12a}
\left\{\begin{split}
&a_j\sqrt{\gamma}\sinh\sqrt{\gamma}-(w^{(3)}_j\left(0\right)-\gamma  w'_j\left(0\right))\int^1_0\left(1-r\right)\sinh\sqrt{\gamma}\,r\,dr\\[0.2em]
&\hspace{0.4\linewidth}=\sqrt{\gamma}\,c\left(\tilde{w},\tilde{v}\right)
+\int^1_0\sinh\sqrt{\gamma}\,r\tilde{V}_j\left(r\right)dr,\\[0.2em]
&a_j\left(\gamma\sinh\sqrt{\gamma}+\alpha\sqrt{\gamma}\cosh\sqrt{\gamma}\right)\\
&\hspace{0.1\linewidth}-(w^{(3)}_j\left(0\right)-\gamma  w'_j\left(0\right))\left[\int^1_0\left(1-r\right)\left(\sqrt{\gamma}\sinh\sqrt{\gamma}\,r+\alpha\cosh\sqrt{\gamma}\, r\right)dr +1\right]\\[0.2em]
&\hspace{0.4\linewidth}=\int^1_0\left(\sqrt{\gamma}\sinh\sqrt{\gamma}\, r+\alpha\cosh\sqrt{\gamma}\,r\right)\tilde{V}_j\left(r\right)dr\\
&\hspace{0.4\linewidth}\qquad+\tilde{V}_j\left(0\right)+\beta \tilde{w}'_j\left(0\right)
\end{split}\right.
\end{equation}
for $a_j$, $w^{(3)}_j\left(0\right)-\gamma w'_j\left(0\right)$, $j=1,2,3$. Since the determinant
\begin{equation*}
\left|\begin{matrix}
 \sinh\sqrt{\gamma}& -\int^1_0\left(1-r\right)\sinh\sqrt{\gamma}\,r\,dr\\[0.2em]
\alpha\cosh\sqrt{\gamma}& -\alpha\int^1_0\left(1-r\right)\cosh\sqrt{\gamma}\,r\,dr -1
\end{matrix}\right|\neq 0,
\end{equation*}
\eqref{eq_12a} has unique solution pairs $a_j$, $w^{(3)}_j\left(0\right)-\gamma w'_j\left(0\right)$. Therefore \eqref{eq12xa34} has the unique solution
\begin{equation}\label{eq12xvbn}
\left\{\left(\begin{matrix}
w_j\\[0.2em]
v_j
\end{matrix}\right)\right\}^3_{j=1}=\mathcal{T}^{-1}\left\{\left(\begin{matrix}
\tilde{w}_j\\[0.2em]
\tilde{v}_j
\end{matrix}\right)\right\}^3_{j=1}\in \bm{D}\left(\mathcal{T}\right)
\end{equation}
which proves, by the closed graph theorem, that the inverse $\mathcal{T}^{-1}$ of $\mathcal{T}$ is closed and bounded. Since $\bm{D}\left(\mathcal{T}\right)\subset (\bm{H}^4\left(\mathscr{G}\right)\cap \hat{\bm{H}}^2\left(\mathscr{G}\right))\times \hat{\bm{H}}^2\left(\mathscr{G}\right)\subset \mathbb{X}$, we conclude from Sobolev's embedding theorem that $\mathcal{T}^{-1}$ is a compact operator on $\mathbb{X}$.
\end{proof}

\begin{remark}\label{rem01245}
The assumption $\gamma>0$ cannot be weakened to $\gamma\geq0$ in the proof of the lemma. In particular this means -- see Definition \ref{D-3-1} and Theorem \ref{T-3-2} in the next section -- that $0$ is excluded from the spectrum of $\mathcal{T}$, irrespective of the value of $\alpha$, so long as $\gamma>0$.
\end{remark}

\begin{lemma}\label{L-3-2}
$\mathcal{T}$ is maximal dissipative when $\beta>0$ and skewadjoint when $\beta=0$.
\end{lemma}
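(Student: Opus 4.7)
The plan is to work directly with the form $(Tx,x)$ for $x\in\bm{D}(T)$, integrate by parts to trade the fourth-order term against the $\hat{\bm{W}}^2_2$-inner product contribution, and then exploit the connectivity conditions at $s=0$ to obtain a clean sign for $\operatorname{Re}(Tx,x)$. Once this sign is nailed down, both statements reduce to combining this computation with the conclusion $0\in\rho(T)$ from Lemma \ref{L-3-1}.

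In detail, for $x=\{(w_j,v_j)\}_{j=1}^3\in\bm{D}(T)$ I would split, according to the definition of the inner product on $\mathbb{X}$,
\begin{equation*}
(Tx,x)=(v,w)_2+(v,w)_1+(-w^{(4)}+\gamma w'',v)_0.
\end{equation*}
In the last term I would integrate by parts twice in each edge-integral; the boundary contributions at $s=1$ vanish because $w_j(1)=w_j''(1)=v_j(1)=0$, while at $s=0$ they produce exactly
\begin{equation*}
\sum_{j=1}^3\bigl[w_j^{(3)}(0)-\gamma w_j'(0)\bigr]\overline{v_j(0)}-\sum_{j=1}^3 w_j''(0)\overline{v_j'(0)},
\end{equation*}
together with interior integrals $\int v_j''\overline{w_j''}$ and $\gamma\int v_j'\overline{w_j'}$ that, upon subtraction from $(v,w)_2+(v,w)_1$, leave only purely imaginary sesquilinear expressions (and the $\alpha$-cross term $\alpha\sum v_j'(0)\overline{w_j'(0)}$, which combines with its conjugate partner to become purely imaginary as well). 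Taking the real part kills every such imaginary piece. In the remaining boundary sum the continuity $v_j(0)=v_k(0)$, inherited from $v\in\hat{\bm{W}}^2_2(G)$ (which is forced by $Tx\in\mathbb{X}$), lets me factor $\overline{v(0)}$ out of the first sum and the connectivity condition $\sum_j(w_j^{(3)}(0)-\gamma w_j'(0))=0$ annihilates it; and substituting $w_j''(0)=\alpha w_j'(0)+\beta v_j'(0)$ into the second sum, the $\alpha$-piece is once more purely imaginary. I expect to be left with
\begin{equation*}
\operatorname{Re}(Tx,x)=-\beta\sum_{j=1}^3\bigl|v_j'(0)\bigr|^2\leq 0.
\end{equation*}

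With this identity, both assertions follow quickly. For $\beta>0$, the identity shows that $T$ is dissipative; density of $\bm{D}(T)$ in $\mathbb{X}$ is standard. Lemma \ref{L-3-1} gives $0\in\rho(T)$, and since the resolvent set is open there exists $\lambda>0$ with $\lambda\in\rho(T)$, so $\operatorname{Ran}(\lambda I-T)=\mathbb{X}$; the Lumer--Phillips theorem then yields that $T$ is maximal dissipative. For $\beta=0$, the same identity gives $\operatorname{Re}(Tx,x)=0$, so $T$ is skewsymmetric; since Lemma \ref{L-3-1} also shows $T^{-1}$ exists and is bounded, $T^{-1}$ inherits skewsymmetry from $T$ via $(T^{-1}x,y)=(T^{-1}x,TT^{-1}y)=-(TT^{-1}x,T^{-1}y)=-(x,T^{-1}y)$, and a bounded everywhere-defined skewsymmetric operator is automatically skewadjoint. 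Hence $(T^{-1})^{\ast}=-T^{-1}$, which is equivalent to $T^{\ast}=-T$.

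The main obstacle is purely the bookkeeping in the integration-by-parts step: one must keep careful track of which boundary contributions at $s=0$ survive, recognise that every term not proportional to $\beta$ is either purely imaginary or killed by one of the two nontrivial connectivity conditions, and in particular see why the elasticity parameter $\alpha$ contributes nothing at all to the real part. Once that identity is isolated, the two maximality statements are essentially a repackaging of Lemma \ref{L-3-1}.
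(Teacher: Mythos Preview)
Your proposal is correct and follows essentially the same route as the paper: the paper also computes $2\operatorname{Re}(Tx,x)=-2\beta\sum_{j}|v_j'(0)|^2$ via integration by parts (stated more tersely than your outline), reads off dissipativity/skewsymmetry, invokes Lemma~\ref{L-3-1} for maximality, and for $\beta=0$ passes to the bounded operator $T^{-1}$ exactly as you do to upgrade skewsymmetry to skewadjointness. The only cosmetic difference is that the paper does not explicitly appeal to openness of $\varrho(T)$ or to Lumer--Phillips at this stage, simply saying maximality ``follows immediately'' from Lemma~\ref{L-3-1}.
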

\begin{proof}
A direct computation using integration by parts gives for any $x\in \bm{D}\left(\mathcal{T}\right)$
\begin{align}
2\operatorname{Re}\<\mathcal{T}x,x> &=\<\mathcal{T}x,x> +\<x,\mathcal{T}x> \nonumber\\
&=-2\beta\sum^3_{j=1}\left|v'_j\left(0\right)\right|^2-2\operatorname{Re}\sum^3_{j=1}\,(w^{(3)}_j\left(1\right)-\gamma  w'_j\left(1\right))\,\overline{{v}_j\left(1\right)}\nonumber\\
&=-2\beta\sum^3_{j=1}\left|v'_j\left(0\right)\right|^2,\label{eq_13}
\end{align}
with the last line a consequence of the fact that ${v}_j\left(1\right)=0$, $j=1,2,3$, by definition. Thus $\mathcal{T}$ is dissipative when $\beta>0$, and skewsymmetric when $\beta=0$. Maximality of $\mathcal{T}$ follows directly from Lemma \ref{L-3-1}. Indeed, since by Lemma \ref{L-3-1} we have $0\in\varrho\left(\mathcal{T}\right)$, the resolvent set of $\mathcal{T}$ (see Definition \ref{D-3-1}), application of the contraction fixed point theorem shows that there exists a sufficiently small $\lambda>0$ such that the range $\operatorname{Im}\left(\lambda I-\mathcal{T}\right)=\mathbb{X}$.

For the proof of skewadjointess we take $\beta=0$ in \eqref{eq_13} and obtain, with $\mathcal{T}x=y$, $y\in\mathbb{X}$, $x\in\bm{D}\left(\mathcal{T}\right)$,
\begin{equation*}
\langle y,\mathcal{T}^{-1}y\rangle=\langle \mathcal{T}\mathcal{T}^{-1}y,\mathcal{T}^{-1}y \rangle=-\langle \mathcal{T}^{-1}y,\mathcal{T}\mathcal{T}^{-1}y \rangle=-\langle \mathcal{T}^{-1}y,y \rangle.
\end{equation*}
It follows that $\mathcal{T}^{-1}$, as it is bounded, is skewadjoint for $\beta=0$; hence $\mathcal{T}$ is skewadjoint for $\beta=0$.
\end{proof}

With the fact that the closed, maximal dissipative system operator $\mathcal{T}$ is densely defined (because $\mathbb{X}$ is a Hilbert space), we have the following result immediately from the Lumer--Phillips theorem (see, e.g., \cite[Section II.3.b]{EngelNagel1999}, \cite[Section 1.4]{Pazy1983} or \cite[Section I.4.2]{Krein1971} for details).
\begin{theorem}\label{T-3-1}
$\mathcal{T}$ is the infinitesimal generator of a $C_0$-semigroup of contractions ${S}\left(t\right)$ on $\mathbb{X}$. So \eqref{eq_09} is well posed in the sense that for any $x_0\in \bm{D}\left(\mathcal{T}\right)$ it has a unique solution $x\in \bm{C}^1\left(\mathbf{R}_+; \mathbb{X}\right)\cap \bm{C}\left(\left[0,\infty\right); \bm{D}\left(\mathcal{T}\right)\right)$.
\end{theorem}

\section{Spectral analysis}\label{sec_3ax}

As stated in the Introduction, we wish to approach the stability problem by investigating the spectral problem for \eqref{eq_09}, but we now can be more specific. Performing separation of variables in \eqref{eq_09} of the form ${x}\left(t\right)=x\exp\left(\lambda t\right)$, $x\in\mathbb{X}$, $\lambda\in\mathbf{C}$, it is easily seen that the resulting spectral problem is of exactly the same form as \eqref{eq_01xaa02} with $\mathbb{X}$ and $\mathcal{T}$ being defined by \eqref{sspace45hgt} and \eqref{eq_08}, \eqref{eq_07}, respectively, and is equivalent to the boundary-eigenvalue problem
\begin{equation}\label{eq1sdcxabv}
\left\{\begin{aligned}
v_j&=\lambda w_j,& j&=1,2,3,\\[0.2em]
-w^{(4)}_j+\gamma w''_j&=\lambda v_j,& j&=1,2,3,\\[0.2em]
w_j\left(1\right)=w''_j\left(1\right)&=0,& j&=1,2,3,\\[0.2em]
w_i\left(0\right)&=w_j\left(0\right),& i,j&=1,2,3,\\[0.2em]
w''_j\left(0\right)-\alpha w_j'\left(0\right)-\beta v_j'\left(0\right)&=0,& j&=1,2,3,\\
\sum^3_{j=1}\,(w^{(3)}_j\left(0\right)-\gamma w_j'\left(0\right))&=0.&&&
\end{aligned}\right.
\end{equation}
The eigenvalues of the boundary-eigenvalue problem \eqref{eq1sdcxabv} coincide (multiplicities included) with those of $\mathcal{T}$, which together with their corresponding root vectors will be analysed in detail in the sections to follow. Then we use the following standard definitions.
\begin{definition}\label{D-3-1}
Let $\lambda\mapsto \left(\lambda I-\mathcal{A}\right)$ be a mapping from $\mathbf{C}$ into the set of closed linear operators in $\mathbb{X}$. A number $\lambda\in \mathbf{C}$ is said to belong to $\varrho\left(\mathcal{A}\right)$, the resolvent set of $\mathcal{A}$, provided $\lambda I-\mathcal{A}$ has a closed and bounded inverse; we call the inverse $\left(\lambda I-\mathcal{A}\right)^{-1}$ the resolvent of $\mathcal{A}$. If $\lambda\not\in\varrho\left(\mathcal{A}\right)$, then $\lambda$ is said to be in the spectrum $\sigma\left(\mathcal{A}\right)$ of $\mathcal{A}$. We say that a number $\lambda_0\in\mathbf{C}$ is an eigenvalue of $\mathcal{A}$ if the geometric eigenspace $\operatorname{Ker}\left(\lambda_0 I-\mathcal{A}\right)\neq\left\{0\right\}$ and there exists an eigenvector $x_0\left(\neq 0\right)$ corresponding to $\lambda_0$ such that \eqref{eq_01xaa02} is satisfied. The set of all eigenvalues of $\mathcal{A}$ forms the point spectrum of $\mathcal{A}$.
\end{definition}

\begin{definition}\label{d03}
Let ${\lambda}_0\in\mathbf{C}$ be an eigenvalue of $\mathcal{A}$. The geometric multiplicity of $\lambda_0$ is the number of linearly independent eigenvectors in a system of chains of root vectors of $\mathcal{A}$ corresponding to $\lambda_0$ and is defined as $\operatorname{dim}\operatorname{Ker}\left(\lambda_0 I-\mathcal{A}\right)$. The algebraic multiplicity of ${\lambda_0}$ is the maximum value of the sum of the lengths of chains corresponding to the linearly independent eigenvectors and is defined as $\sup_{k\in\mathbf{N}}\operatorname{dim}\operatorname{Ker}\left(\lambda_0 I-\mathcal{A}\right)^k$. We call ${\lambda_0}$ semisimple if it has equal geometric and algebraic multiplicities (i.e.\ there are no associated vectors corresponding to $\lambda_0$), and simple if it is semisimple and $\operatorname{dim}\operatorname{Ker}\left(\lambda_0 I-\mathcal{A}\right)=1$.
\end{definition}

\begin{definition}\label{d05xa}
If an eigenvalue $\lambda_0\in\mathbf{C}$ is isolated and $\lambda_0 I-\mathcal{A}$ is a Fredholm operator (see \cite[Section IV.5.1]{Kato1995} for definition), then we call $\lambda_0$ a normal eigenvalue. The set of all such eigenvalues is denoted by $\sigma_0\left(\mathcal{A}\right)$.
\end{definition}

The next result refers to the structure of the spectrum of $\mathcal{T}$ and location of eigenvalues.
\begin{theorem}\label{T-3-2}
The following statements hold:
\begin{enumerate}[\normalfont(1)]
\item\label{T-3-2-a} $\sigma\left(\mathcal{T}\right)=\sigma_0\left(\mathcal{T}\right)$.
\item\label{T-3-2-b} The number $\lambda=0$ is not an eigenvalue of $\mathcal{T}$.
\item\label{T-3-2-c} If $\lambda$ is an eigenvalue of $\mathcal{T}$, then $\overline{\lambda}$, the complex-conjugate of ${\lambda}$, is also an eigenvalue of $\mathcal{T}$. Hence $\sigma\left(\mathcal{T}\right)$ is symmetric with respect to the real axis.
\item\label{T-3-2-d} $\sigma\left(\mathcal{T}\right)$ lies in the closed left half-plane; when $\beta>0$, $\sigma\left(\mathcal{T}\right)$ is confined to the open left half-plane.
\end{enumerate}
\end{theorem}
\begin{proof}
Statements \ref{T-3-2-a} and \ref{T-3-2-b} follow from Lemma \ref{L-3-1}. In fact from the lemma it follows that $\mathcal{T}$ has a compact resolvent. So the spectrum of $\mathcal{T}$ consists of an infinite number of normal eigenvalues with no finite accumulation points (see \cite[Corollary XI.8.4]{GohbergEtAl1990}). Using that $\mathcal{T}$ is a real operator establishes statement \ref{T-3-2-c}. Indeed, conjugation of \eqref{eq_01xaa02} shows that $\overline{x}$ satisfies its conjugate spectral problem and is an eigenvector of $\mathcal{T}$ which corresponds to the eigenvalue $\overline{\lambda}$. In order to establish statement \ref{T-3-2-d}, we take the inner product of \eqref{eq_01xaa02} with the corresponding $x$,
\begin{equation*}
\<\mathcal{T}x,x> =\lambda\left\|x\right\|^2.
\end{equation*}
The real part of this equation is
\begin{equation}\label{eq_15}
\operatorname{Re}\<\mathcal{T}x ,x > =\operatorname{Re}{\lambda}\left\|x \right\| ^2,
\end{equation}
yielding by Lemma \ref{L-3-2} 
\begin{equation*}
\operatorname{Re}{\lambda}=\frac{\operatorname{Re}\<\mathcal{T}x ,x > }{\left\|x \right\| ^2}\leq 0.
\end{equation*}
This proves that the spectrum of $\mathcal{T}$ lies in the closed left half-plane. We show that if $\beta>0$, then $\operatorname{Re}{\lambda} <0$. Indeed, suppose ${\lambda}$ is a purely imaginary eigenvalue, $\operatorname{Re}{\lambda} =0$, with eigenvector $x$. Then from \eqref{eq_15} we have $\operatorname{Re}\<\mathcal{T}x ,x > =0$. The proof of Lemma \ref{L-3-2} then clearly implies, since $\beta>0$,
\begin{equation*}
\sum_{j=1}^3 \left|v'_j\left(0\right)\right|^2=0.
\end{equation*}
As
\begin{equation*}
 x=\left\{\left(\begin{matrix}
w_j\\[0.2em]
v_j
\end{matrix}\right)\right\}^3_{j=1}
\end{equation*}
is an eigenvector, so $v_j=\lambda w_j$, $j=1,2,3$, it follows that
\begin{equation*}
\left|\lambda\right|^2\sum_{j=1}^3 \left|w'_j\left(0\right)\right|^2=0,
\end{equation*}
and therefore $w'_j\left(0\right)=0$, $j=1,2,3$, since $\left|\lambda\right|>0$. In this case, it is readily verified from \eqref{eq1sdcxabv} that $w_j=w_j\left(s\right)$, $j=1,2,3$, satisfies the boundary-eigenvalue problem
\begin{equation}\label{eq_16}
\left\{\begin{aligned}
w^{(4)}_j-\gamma w''_j&=-\lambda^2w_j,& j&=1,2,3,\\[0.2em]
w_j\left(1\right)=w''_j\left(1\right)&=0,& j&=1,2,3,\\[0.2em]
w_i\left(0\right)&=w_j\left(0\right),& i,j&=1,2,3,\\[0.2em]
w'_j\left(0\right)=w''_j\left(0\right)&=0,& j&=1,2,3,\\[0.2em]
\sum^3_{j=1}w^{(3)}_j\left(0\right)&=0.&&&
\end{aligned}\right.
\end{equation}
We show that any solution of \eqref{eq_16} must be the zero solution. Notice that since $\lambda$ is a purely imaginary eigenvalue, setting $\lambda=i\mu$, $\mu\in \mathbf{R}$, we have $-\lambda^2=\mu^2>0$. Consider then the boundary-eigenvalue problem
\begin{equation}\label{eq_1s6}
\left\{\begin{aligned}
\phi^{(4)}-\gamma \phi''&=\mu^2 \phi,& &\\[0.2em]
\phi\left(1\right)=\phi''\left(1\right)&=0,& &\\[0.2em]
\phi'\left(0\right)=\phi''\left(0\right)&=0.& &
\end{aligned}\right.
\end{equation}
Assume (to reach a contradiction) that $\phi=\phi\left(s\right)$ is a nonzero solution of \eqref{eq_1s6} which, without loss of generality, we may assume to be real-valued. We multiply the differential equation in \eqref{eq_1s6} by $\phi^{(3)}$ and integrate from $0$ to $1$, performing integration by parts, and using the boundary conditions. Then
\begin{equation*}
\begin{split}
&(\phi^{(3)}\left(1\right)\!{)}^2-(\phi^{(3)}\left(0\right)\!{)}^2-\left(\int_0^1\phi^{(3)}\left(s\right)\phi^{(4)}\left(s\right)ds-\gamma\int_0^1 \phi^{(3)}\left(s\right)\phi''\left(s\right)ds\right)\\
&\hspace{0.1\linewidth}=-\mu^2\,(\phi'\left(1\right)\!{)}^2-\int_0^1 \phi^{(3)}\left(s\right)\phi\left(s\right)ds,
\end{split}
\end{equation*}
and there we have
\begin{equation*}
(\phi^{(3)}\left(1\right)\!{)}^2-(\phi^{(3)}\left(0\right)\!{)}^2=-\mu^2\,(\phi'\left(1\right)\!{)}^2
\end{equation*}
which obviously can be true if and only if $\phi^{(3)}\left(0\right)\neq 0$. Then solutions of \eqref{eq_16} are of the form $w_j\left(s\right)=a_j\phi\left(s\right)$, $j=1,2,3$. The compatibility condition $w_i\left(0\right)=w_j\left(0\right)$, $i,j=1,2,3$, implies that the $a_i=a_j\equiv a$. Hence
\begin{equation*}
w_j\left(s\right)=a\phi\left(s\right),\quad j=1,2,3.
\end{equation*}
Since $\sum^3_{j=1}w^{(3)}_j\left(0\right)=3a\phi^{(3)}\left(0\right)=0$, we obtain $a=0$. This means that \eqref{eq_16} has only trivial zero solutions. Thus there can be no purely imaginary eigenvalue. This completes the proof of statement \ref{T-3-2-d}, and hence of the theorem.
\end{proof}

\subsection{Eigenvalues and eigenvectors}\label{sec_4}
In this subsection we give a careful analysis of how to determine the eigenvalues of $\mathcal{T}$ and the corresponding eigenvectors. Let $\lambda$ be an eigenvalue of $\mathcal{T}$ with eigenvector $x$. In view of \eqref{eq1sdcxabv},
\begin{equation*}
x=\left\{\left(\begin{matrix}
w_j\\[0.2em]
v_j
\end{matrix}\right)\right\}^3_{j=1}=\left\{\left(\begin{matrix}
w_j\\[0.2em]
\lambda w_j
\end{matrix}\right)\right\}^3_{j=1}
\end{equation*}
where $w_j=w_j\left(s\right)$, $j=1,2,3$, satisfies the boundary-eigenvalue problem
\begin{equation}\label{eq1sdcxabddv}
\left\{\begin{aligned}
w^{(4)}_j-\gamma w''_j&=-\lambda^2 w_j,& j&=1,2,3,\\[0.25em]
w_j\left(1\right)=w''_j\left(1\right)&=0,& j&=1,2,3,\\[0.25em]
w_i\left(0\right)&=w_j\left(0\right),& i,j&=1,2,3,\\[0.25em]
w''_j\left(0\right)-\left(\alpha+\lambda\beta\right) w_j'\left(0\right)&=0,& j&=1,2,3,\\
\sum^3_{j=1}\,(w^{(3)}_j\left(0\right)-\gamma w_j'\left(0\right))&=0.&&&
\end{aligned}\right.
\end{equation}
Then \eqref{eq1sdcxabddv} has associated with it the characteristic equation
\begin{equation}\label{eqwchar42}
\mu^4-\gamma \mu^2+\lambda^2=0.
\end{equation}
The characteristic exponents $\mu=\mu\left(\lambda\right)$ satisfying \eqref{eqwchar42} are
\begin{equation*}
\mu\left(\lambda\right)=\pm\sqrt{\frac{\gamma\pm \sqrt{\gamma^2-4\lambda^2}}{2}},
\end{equation*}
and thus
\begin{equation}\label{eq_19}
\left\{\begin{split}
 \mu_1\left(\lambda\right)&=\displaystyle\sqrt{\frac{\gamma+\sqrt{\gamma^2-4\lambda^2}}{2}},\\[0.2em]
\mu_2\left(\lambda\right)&=\displaystyle-\sqrt{\frac{\gamma+ \sqrt{\gamma^2-4\lambda^2}}{2}}=-\mu_1\left(\lambda\right),\\[0.2em]
\mu_3\left(\lambda\right)&=\displaystyle\sqrt{\frac{\gamma- \sqrt{\gamma^2-4\lambda^2}}{2}},\\[0.2em]
\mu_4\left(\lambda\right)&=\displaystyle-\sqrt{\frac{\gamma- \sqrt{\gamma^2-4\lambda^2}}{2}}=-\mu_3\left(\lambda\right).
\end{split}\right.
\end{equation}
Using \eqref{eq_19} it is easily verified that, provided $\lambda\neq -\frac{\gamma}{2}$ so $\mu_1\neq\mu_3$, the general solution of the differential equation $w^{(4)}-\gamma w''=-\lambda^2 w$ which satisfies the boundary conditions $w\left(1\right)=w''\left(1\right)=0$ is given by 
\begin{equation}\label{eq12xfg}
\psi\left(s\right)=c_1\sinh\mu_1\left(1-s\right)+c_2\sinh\mu_3\left(1-s\right),
\end{equation}
arbitrary constants $c_1$, $c_2$. When $\lambda=-\frac{\gamma}{2}$ we clearly have $\mu_1=\mu_3\equiv \mu$ and we find that
\begin{equation*}
\psi\left(s\right)=c_1\sinh\mu\left(1-s\right)+c_2\,\bigl(s\cosh\mu\left(1-s\right)-e^{\mu\left(1-s\right)}\bigr).
\end{equation*}
Let us agree to restrict attention to the case $\lambda\in\mathbf{C}\backslash\left\{-\frac{\gamma}{2}\right\}$, so we consider \eqref{eq12xfg} instead. In this case we have for the corresponding eigenfunctions of \eqref{eq1sdcxabddv}
\begin{equation}\label{eqeig01}
w_j\left(s\right)=c_{j,1}\sinh\mu_1\left(1-s\right)+c_{j,2}\sinh\mu_3\left(1-s\right),\quad j=1,2,3.
\end{equation}
Using the vertex conditions $w''_j\left(0\right)-\left(\alpha+\lambda\beta\right) w_j'\left(0\right)=0$, $j=1,2,3$, and $\sum^3_{j=1}\,(w^{(3)}_j\left(0\right)-\gamma w_j'\left(0\right))=0$, we then obtain
\begin{align}
c_{j,1}\mu_1\left[\mu_1\sinh\mu_1+\left(\alpha+\lambda\beta\right)\cosh\mu_1\right]+c_{j,2}\mu_3\left[\mu_3\sinh\mu_3+\left(\alpha+\lambda\beta\right)\cosh\mu_3\right]&=0,\label{eq_20ax}\\[0.2em]
\left(\mu^3_1\cosh\mu_1-\gamma \mu_1\cosh \mu_1\right)\sum^3_{j=1}c_{j,1}+\left(\mu^3_3 \cosh\mu_3 -\gamma\mu_3\cosh\mu_3\right)\sum^3_{j=1}c_{j,2}&=0.\label{eq_20bx}
\end{align}
We note from \eqref{eq_19} that
\begin{equation*}
\mu^2_1+\mu^2_3=\gamma,\quad \mu_1\mu_3=\lambda.
\end{equation*}
Substitution in \eqref{eq_20bx} and summing \eqref{eq_20ax} over $j=1,2,3$ yields the following algebraic equations for $\sum^3_{j=1}c_{j,1}$, $\sum^3_{j=1}c_{j,2}$:
\begin{equation}\label{eq_21}
\left\{\begin{split}
\mu_3\cosh\mu_1\sum^3_{j=1}c_{j,1}+ \mu_1\cosh\mu_3\sum^3_{j=1}c_{j,2}&=0,\\[0.2em]
\left[\mu^2_1\sinh\mu_1+\left(\alpha+\lambda\beta\right)\mu_1\cosh\mu_1\right]\sum^3_{j=1}c_{j,1}\hspace{0.2\linewidth}&\\
+\left[\mu^2_3\sinh\mu_3+\left(\alpha+\lambda\beta\right)\mu_3\cosh\mu_3\right]\sum^3_{j=1}c_{j,2}&=0.
\end{split}\right.
\end{equation}
We divide the discussion of the solutions of \eqref{eq_21} into two cases. Before going on to these, however, it will be convenient to derive a few intermediate results. Let $\Delta_1=\Delta_1\left(\lambda\right)$ be the determinant of the coefficient matrix formed by \eqref{eq_21}. We then see easily that
\begin{equation}\label{eq_22}
\Delta_1\left(\lambda\right)=\mu^3_3\cosh\mu_1\sinh\mu_3-\mu^3_1\sinh\mu_1\cosh\mu_3
+(\mu^2_3-\mu^2_1)\left(\alpha+\lambda\beta\right)\cosh\mu_1\cosh\mu_3.
\end{equation}
Let us set
\begin{equation}\label{eqcase12xa}
\sum^3_{j=1}c_{j,2}=-b\mu_1\left[\mu_1\sinh\mu_1+\left(\alpha+\lambda\beta\right)\cosh\mu_1\right],
\end{equation}
for some constant $b\neq 0$. Then
\begin{equation}\label{eqcase12xb}
\sum^3_{j=1}c_{j,1}=b \mu_3\left[\mu_3\sinh\mu_3+\left(\alpha+\lambda\beta\right)\cosh\mu_3\right].
\end{equation}
The compatibility condition $w_i\left(0\right)=w_j\left(0\right)\equiv w\left(0\right)$, $i,j=1,2,3$, implies, together with \eqref{eqeig01}, that
\begin{equation}\label{eqnew08}
c_{j,1}\sinh\mu_1+c_{j,2}\sinh\mu_3=w\left(0\right),\quad j=1,2,3.
\end{equation}
So it follows on summing \eqref{eqnew08} over $j=1,2,3$ and using \eqref{eqcase12xa},  \eqref{eqcase12xb} that
\begin{align*}
&\sinh\mu_1\left\{b \mu_3\left[\mu_3\sinh\mu_3+\left(\alpha+\lambda\beta\right)\cosh\mu_3\right]\right\}\\
&\hspace{0.2\linewidth}-\sinh\mu_3\left\{b\mu_1\left[\mu_1\sinh\mu_1+\left(\alpha+\lambda\beta\right)\cosh\mu_1\right]\right\}=3w\left(0\right).
\end{align*}
From this it follows that if we let $\Delta_2=\Delta_2\left(\lambda\right)$ be another determinant given by
\begin{equation}\label{eq_23}
\Delta_2\left(\lambda\right)=(\mu^2_3-\mu^2_1)\sinh\mu_1\sinh\mu_3+\left(\alpha+\lambda\beta\right)\left(\mu_3\sinh\mu_1\cosh\mu_3 -\mu_1\cosh\mu_1\sinh\mu_3\right),
\end{equation}
then
\begin{equation*}
w\left(0\right)=\frac{b}{3}\Delta_2\left(\lambda\right).
\end{equation*}
\begin{enumerate}[label=,leftmargin=*,align=left,labelwidth=\parindent,labelsep=0pt]
\item\label{item04}\textbf{Case 1.} We consider first the case of a nonzero solution pair $\sum^3_{j=1}c_{j,1}$, $\sum^3_{j=1}c_{j,2}$ of \eqref{eq_21}. The necessary and sufficient condition for this is that $\Delta_1\left(\lambda\right)=0$. Solving the system of equations given by \eqref{eq_20ax} and \eqref{eqnew08} for $c_{j,1}$, $c_{j,2}$, $j=1,2,3$, we have
\begin{align}
 c_{j,1}&=\displaystyle\frac{b\mu^2_3\left[\mu_3\sinh\mu_3+\left(\alpha+\lambda\beta\right)\cosh\mu_3\right]\cosh\mu_1}{3\mu_3\cosh\mu_1},\label{eqnew05ax}\\[0.2em]
c_{j,2}&=\displaystyle-\frac{b\mu^2_1\left[\mu_1\sinh\mu_1+\left(\alpha+\lambda\beta\right)\cosh\mu_1\right]\cosh\mu_3}{3\mu_1\cosh\mu_3}.\label{eqnew05axss}
\end{align}
Note that
\begin{equation*}
\mu^2_3\left[\mu_3\sinh\mu_3+\left(\alpha+\lambda\beta\right)\cosh\mu_3\right]\cosh\mu_1
=\Delta_1\left(\lambda\right)+\mu^2_1\left[\mu_1\sinh\mu_1+\left(\alpha+\lambda\beta\right)\cosh\mu_1\right]\cosh\mu_3.
\end{equation*}
Let 
\begin{equation*}
b=\frac{3c \mu_1\mu_3\cosh\mu_1\cosh\mu_3}{\mu^2_1\left[\mu_1\sinh\mu_1+\left(\alpha+\lambda\beta\right)\cosh\mu_1\right]\cosh\mu_3}.
\end{equation*}
It is easy to see then from \eqref{eqnew05ax}, \eqref{eqnew05axss} that $c_{j,1}= c\mu_1\cosh\mu_3$, $c_{j,2}=-c\mu_3\cosh\mu_1$, $j=1,2,3$. Using these relations in \eqref{eqeig01}, we obtain for the eigenfunctions
\begin{equation}\label{eqwwedf4}
w_j\left(s\right)=c\left(\mu_1\cosh\mu_3\sinh\mu_1\left(1-s\right)-\mu_3\cosh\mu_1\sinh\mu_3\left(1-s\right)\right),\quad j=1,2,3.
\end{equation}
\item\label{item05}\textbf{Case 2.} We suppose now that \eqref{eq_21} has only the zero solution, i.e.\ $\Delta_1\left(\lambda\right)\neq 0$.
From \eqref{eqnew08} we then get that $w\left(0\right)=0$. Nonzero solutions of the system of equations given by \eqref{eq_20ax} and \eqref{eqnew08} exist if and only if $\Delta_2\left(\lambda\right)=0$. Then, because $\sinh\mu_1\neq 0$ and $\sinh\mu_3\neq 0$, we have
\begin{equation*}
c_{j,1}=-\frac{\sinh\mu_3}{\sinh\mu_1}\,c_{j,2},\quad j=1,2,3.
\end{equation*}
Setting
\begin{equation*}
a_j=\frac{c_{j,1}}{\sinh\mu_3}=-\frac{c_{j,2}}{\sinh\mu_1},\quad j=1,2,3,
\end{equation*}
and recalling that here $\sum^3_{j=1}c_{j,1}=\sum^3_{j=1}c_{j,2}=0$, we arrive at
\begin{equation*}
\sum^3_{j=1}a_j=\frac{1}{\sinh\mu_3}\sum^3_{j=1}c_{j,1}=-\frac{1}{\sinh\mu_1}\sum^3_{j=1}c_{j,2}=0.
\end{equation*}
In this case \eqref{eqeig01} becomes
\begin{equation*}
w_{j}\left(s\right)=a_j\left(\sinh\mu_3\sinh\mu_1\left(1-s\right)-\sinh\mu_1\sinh\mu_3\left(1-s\right)\right),\quad j=1,2,3,\quad \sum^3_{j=1}a_j=0.
\end{equation*}
Let us write $w_{1,j}= a_{1,j}\phi$, $w_{2,j}= a_{2,j}\phi$, $j=1,2,3$, where 
\begin{equation}\label{eq19}
\left\{a_{1,1},a_{1,2},a_{1,3}\right\}=\left\{-2,1,1\right\},\quad \left\{a_{2,1},a_{2,2},a_{2,3}\right\}=\left\{0,1,-1\right\}
\end{equation}
and $\phi$ is given by
\begin{equation}\label{eq19phi}
\phi\left(s\right)=\sinh\mu_3\sinh\mu_1\left(1-s\right)-\sinh\mu_1\sinh\mu_3\left(1-s\right).
\end{equation}
The eigenvector $x$ corresponding to $\lambda$ can then be represented as
\begin{equation*}
x= \hat{a}x_1+\hat{c}x_2,\quad \hat{a},\hat{c}\in \mathbf{C},
\end{equation*}
with
\begin{equation*}
x_1=\left\{\left(\begin{matrix}
w_{1,j}\\[0.2em]
\lambda w_{1,j}
\end{matrix}\right)\right\}^3_{j=1},\quad 
x_2=\left\{\left(\begin{matrix}
w_{2,j}\\[0.2em]
\lambda w_{2,j}
\end{matrix}\right)\right\}^3_{j=1}.
\end{equation*}
\end{enumerate}
We summarise in the following result.
\begin{theorem}\label{T-4-1}
Let the characteristic exponents satisfying \eqref{eqwchar42} be given as in \eqref{eq_19} for $\lambda\in\mathbf{C}\backslash\left\{-\frac{\gamma}{2}\right\}$, and let the determinants $\Delta_1\left(\lambda\right)$ and $\Delta_2\left(\lambda\right)$ be given by \eqref{eq_22} and \eqref{eq_23}, respectively. The spectrum of $\mathcal{T}$ consists of two branches,
\begin{equation*}
\sigma\left(\mathcal{T}\right)=\sigma^{(1)}\left(\mathcal{T}\right)\cup\sigma^{(2)}\left(\mathcal{T}\right),
\end{equation*}
where
\begin{equation*}
\sigma^{(1)}\left(\mathcal{T}\right)=\left\{\lambda\in \mathbf{C}\backslash\left\{-\frac{\gamma}{2}\right\}~\middle|
~\Delta_1\left(\lambda\right)=0\right\},\quad \sigma^{(2)}\left(\mathcal{T}\right)=\left\{\lambda\in \mathbf{C}\backslash\left\{-\frac{\gamma}{2}\right\}~\middle|
~\Delta_2\left(\lambda\right)=0\right\}.
\end{equation*}
For each $\lambda\in \sigma^{(1)}\left(\mathcal{T}\right)$ the corresponding eigenvector is
\begin{equation*}
x^{(1)}=\left\{\left(\begin{matrix}
w_j\\[0.2em]
\lambda w_j
\end{matrix}\right)\right\}^3_{j=1},
\end{equation*}
where the $w_j$ are given by \eqref{eqwwedf4}. For each $\lambda\in\sigma^{(2)}\left(\mathcal{T}\right)$ the corresponding eigenvector is of the form
\begin{equation*}
x^{(2)}=\hat{a}x_1+\hat{c}x_2,\quad \hat{a},\hat{c}\in \mathbf{C},
\end{equation*}
with
\begin{equation*}
x_1=\left\{\left(\begin{matrix}
a_{1,j}\phi\\[0.2em]
 a_{1,j}\lambda\phi
\end{matrix}\right)\right\}^3_{j=1},\quad 
x_2=\left\{\left(\begin{matrix}
a_{2,j}\phi\\[0.2em]
 a_{2,j}\lambda\phi
\end{matrix}\right)\right\}^3_{j=1},
\end{equation*}
where the $a_{1,j}$, $a_{2,j}$ and $\phi$ are given by \eqref{eq19} and \eqref{eq19phi}, respectively.
\end{theorem}

\begin{remark}
With the choice of eigenvectors $x_1$, $x_2$ as in the theorem, one can verify directly that they form an orthogonal basis for the eigensubspace. Indeed, taking the inner product of $x_1$ with $x_2$ yields
\begin{align*}
\<x_1,x_2>&=\sum^3_{j=1}a_{1,j}\overline{a_{2,j}}\left(\int^1_0\,\bigl|\phi''\left(s\right)\bigr|^2ds+\alpha\,\bigl|\phi'\left(0\right)\bigr|^2\right)+\gamma\sum^3_{j=1}a_{1,j}\overline{a_{2,j}}\int^1_0\,\bigl|\phi'\left(s\right)\bigr|^2ds\\
&\qquad+\sum^3_{j=1}a_{1,j}\overline{a_{2,j}}\int^1_0\,\bigl|\lambda\phi\left(s\right)\bigr|^2ds\\[0.2em]
&=\left(\int^1_0\,\bigl|\phi''\left(s\right)\bigr|^2ds+\alpha\,\bigl|\phi'\left(0\right)\bigr|^2+\gamma\int^1_0\,\bigl|\phi'\left(s\right)\bigr|^2ds+\int^1_0\,\bigl|\lambda\phi\left(s\right)\bigr|^2ds\right)\sum^3_{j=1}a_{1,j}\overline{a_{2,j}}
\end{align*}
which in view of the choice of constants $a_{1,j}$, $a_{2,j}$ as in \eqref{eq19} obviously implies $x_1\perp x_2$.
\end{remark}

\subsection{Eigenvalue asymptotics}\label{sec_5}
Our main task in this subsection will be to investigate, by analysis of the asymptotic zeros of the determinants $\Delta_1$ and $\Delta_2$ given by \eqref{eq_22} and \eqref{eq_23}, respectively, how the two branches $\sigma^{(1)}\left(\mathcal{T}\right)$ and $\sigma^{(2)}\left(\mathcal{T}\right)$ of eigenvalues of $\mathcal{T}$ behave asymptotically. The standard procedure for such an undertaking originated in the work of Birkhoff \cite{Birkhoff1908a,Birkhoff1908b} (and was further developed by Naimark, see \cite{Naimark1967}). Take, e.g., the branch $\sigma^{(1)}\left(\mathcal{T}\right)$ (the branch $\sigma^{(2)}\left(\mathcal{T}\right)$ could be treated similarly). The procedure involves the use of asymptotic expansions of $\Delta_1\left(\lambda\right)$ when $\left|\lambda\right|$ is large and subsequent use of Rouche's theorem, applied to an appropriately chosen comparison function (whose zeros, or at least their proper enumeration and asymptotics are known), to count the number of the zeros of $\lambda\mapsto\Delta_1\left(\lambda\right)$ in small neighbourhoods, say disks, around the zeros of the corresponding comparison function and verify that in each such disc there is exactly one zero of $\Delta_1$. An advantage of this method is that it requires no closed-form computation of $\Delta_1$. However, when the determinants may be written down in closed form, as in the cases here, one is led to adopt a more direct approach. In this vein, we use the explicit forms \eqref{eq_22} and \eqref{eq_23} and then proceed to derive the asymptotic zeros by means of a direct, but involved, computation.

In the course of our analysis it will frequently be useful to write
\begin{equation*}
{\mu}_3-{\mu}_1=\left(i-1\right)\rho,\quad {\mu}_1+{\mu}_3=\left(1+i\right)\rho
\end{equation*}
for the (yet to be determined) parameter $\rho$, leading to the relationships
\begin{equation}\label{eqrel023}
\mu_3=i{\rho},\quad\mu^2_3=-{\rho}^2,\quad\mu^3_3=-i{\rho}^3,\quad
\mu_1={\rho},\quad\mu^2_1={\rho}^2,\quad\mu^3_1={\rho}^3,
\end{equation}
and note that
\begin{equation}\label{eqrel023ss}
 \lambda=\mu_1\mu_3=i{\rho}^2.
\end{equation}
We know from Theorem \ref{T-3-2} that the eigenvalues of $\mathcal{T}$ lie in the left half-plane and occur in symmetric pairs $\lambda$, $\overline{\lambda}$, corresponding to those eigenvalues with nonzero imaginary part. So let us consider the sector $\arg\lambda\in\left[\frac{\pi}{2}, \pi\right]$ and, with the substitution \eqref{eqrel023ss}, thus consider $\arg\rho\in\left[0, \frac{\pi}{4}\right]$ in what follows.

\subsubsection{Asymptotic zeros of $\Delta_1$}\label{sec_5_1}
The following two elementary formulae will be needed: 
\begin{align*}
\sinh\left(x+y\right)=\sinh x\cosh y+\cosh x\sinh y,\\[0.2em]
\cosh\left(x+y\right)=\cosh x\cosh y+\sinh x \sinh y.
\end{align*}
Let us use these to rewrite \eqref{eq_22} as
\begin{equation}\label{eq_22x}
\begin{split}
2\Delta_1\left(\lambda\right)&=(\mu^3_3-\mu^3_1)\sinh\left(\mu_1+\mu_3\right)+(\mu^3_1+\mu^3_3)\sinh(\mu_3-\mu_1) \\
&\qquad+(\mu^2_3-\mu^2_1)\left(\alpha+\lambda\beta\right)\left(\cosh(\mu_1+\mu_3)+\cosh(\mu_3-\mu_1)\right).
\end{split}
\end{equation}
The asymptotic zeros of $\Delta_1$ are investigated in three steps.
\begin{enumerate}[label=,leftmargin=*,align=left,labelwidth=\parindent,labelsep=0pt,ref={\arabic*}]
\item\label{item06}\textbf{Step 1.} We first determine the roots of the equation
\begin{equation*}
\cosh\left(1+i\right)\rho+\cosh\left(i-1\right)\rho=0.
\end{equation*}
We may equivalently write that
\begin{equation*}
-1=\frac{\cosh\left(i-1\right)\rho}{\cosh\left(1+i\right)\rho},
\end{equation*}
which in the sector $\arg\rho\in\left[0, \frac{\pi}{4}\right]$ has the roots
\begin{equation*}
\rho_n=\left(n +\frac{1}{2}\right)\pi.
\end{equation*}
So in this case we have
\begin{equation}\label{eq18}
\sinh\left(i-1\right)\rho_n=\sinh\left(1+i\right)\rho_n,\quad \cosh\left(i-1\right)\rho_n=-\cosh\left(1+i\right)\rho_n.
\end{equation}
\item\label{item07}\textbf{Step 2.} Let the sequence $\left\{\lambda_n\right\}$ represent the zeros of $\Delta_1$, where the $\lambda_n$ are considered in the neighbourhoods of the $i\rho_n^2$. Setting
\begin{equation*}
\tilde{\rho}_n=\rho_n+\xi_n,
\end{equation*}
where the $\xi_n$ will be determined, writing $\tilde{\rho}_n$ in place of $\rho$ in \eqref{eqrel023} and \eqref{eqrel023ss}, and substituting the relevant relationships in \eqref{eq_22x} we get
\begin{align*}
2\Delta_1\left(\lambda_n\right)&=-\tilde{\rho}^3_n\left(1+i\right)\sinh\left(1+i\right)\tilde{\rho}_n -\tilde{\rho}^3_n\left(i-1\right)\sinh\left(i-1\right)\tilde{\rho}_n \\
&\qquad-2\tilde{\rho}^2_n\,(\alpha+i\tilde{\rho}^2_n\beta)\left(\cosh\left(1+i\right)\tilde{\rho}_n+\cosh\left(i-1\right)\tilde{\rho}_n\right).
\end{align*}
The equation $2\Delta_1\left(\lambda_n\right)=0$ then becomes
\begin{align*}
0&=-\tilde{\rho}_n\left(1+i\right)\sinh\left(1+i\right)\tilde{\rho}_n-\tilde{\rho}_n\left(i-1\right)\sinh\left(i-1\right)\tilde{\rho}_n \\
&\qquad -2\,(\alpha+i\tilde{\rho}^2_n\beta)\left(\cosh\left(1+i\right)\tilde{\rho}_n+\cosh\left(i-1\right)\tilde{\rho}_n\right)
\end{align*}
or, equivalently,
\begin{equation}\label{eq_24}
-\frac{\tilde{\rho}_n}{2\,(\alpha+i\tilde{\rho}^2_n\beta)} =\frac{\cosh\left(1+i\right)\tilde{\rho}_n+\cosh\left(i-1\right)\tilde{\rho}_n}{\left(1+i\right)\sinh\left(1+i\right)\tilde{\rho}_n+\left(i-1\right)\sinh\left(i-1\right)\tilde{\rho}_n}.
\end{equation}
We now use \eqref{eq_24} to determine the asymptotics of $\xi_n$. For $\rho_n$ large we can simplify our calculations below by noting that
\begin{equation*}
\tanh\left(1+i\right)\rho_n=\frac{\sinh\left(1+i\right)\rho_n}{\cosh\left(1+i\right)\rho_n}
=1+{O}\,(e^{-2\rho_n})
\end{equation*}
so
\begin{equation}\label{eqnew321}
\tanh\left(1+i\right)\rho_n\sim 1.
\end{equation}
We proceed now to estimate the right-hand side of \eqref{eq_24}. Consider first the numerator. A straightforward calculation using \eqref{eq18} and \eqref{eqnew321} shows that for large $\rho_n$
\begin{align*}
&\cosh\left(1+i\right)\tilde{\rho}_n+\cosh\left(i-1\right)\tilde{\rho}_n\notag\\[0.2em]
&\hspace{0.1\linewidth}\sim\cosh\left(1+i\right)\rho_n\left(\cosh\left(1+i\right)\xi_n-\cosh\left(i-1\right)\xi_n\right.\\
&\hspace{0.4\linewidth}\left.+\sinh\left(1+i\right)\xi_n+\sinh\left(i-1\right)\xi_n\right)\notag\\[0.2em]
&\hspace{0.1\linewidth}= 2e^{\xi_n}\sinh i\xi_n\cosh\left(1+i\right)\rho_n.\notag
\end{align*}
Likewise for the denominator in the right-hand side of \eqref{eq_24},
\begin{align*}
&\left(1+i\right)\sinh\left(1+i\right)\tilde{\rho}_n+\left(i-1\right)\sinh\left(i-1\right)\tilde{\rho}_n\notag\\[0.2em]
&\hspace{0.1\linewidth}\sim\cosh\left(1+i\right)\rho_n\left[\left(1+i\right)\left(\sinh\left(1+i\right)\xi_n+\cosh\left(1+i\right)\xi_n\right)\right.\\
&\hspace{0.4\linewidth}\left.-\left(i-1\right)\left(\sinh\left(i-1\right)\xi_n-\cosh\left(i-1\right)\xi_n\right)\right]\notag\\[0.2em]
&\hspace{0.1\linewidth}= 2e^{\xi_n}\left(\sinh i\xi_n+i\cosh i\xi_n\right)\cosh\left(1+i\right)\rho_n.\notag
\end{align*}
So the right-hand side of \eqref{eq_24} becomes
\begin{equation*}
\frac{\cosh\left(1+i\right)\tilde{\rho}_n+\cosh\left(i-1\right)\tilde{\rho}_n}{\left(1+i\right)\sinh\left(1+i\right)\tilde{\rho}_n+\left(i-1\right)\sinh\left(i-1\right)\tilde{\rho}_n}\sim \frac{\sinh i\xi_n}{\sinh i\xi_n+i\cosh i\xi_n}
\end{equation*}
from which it is clear that $\xi_n$ is bounded for large $ n$. In fact, for large $ n$ we can now write \eqref{eq_24} as
\begin{equation}\label{eq_24new}
-\frac{\tilde{\rho}_n}{2\,(\alpha+i\tilde{\rho}^2_n\beta)}\sim \xi_n.
\end{equation}
To see how the left-hand side of \eqref{eq_24new} behaves asymptotically we observe that
\begin{equation*}
\frac{\tilde{\rho}_n}{\alpha+i\tilde{\rho}^2_n\beta}=-\frac{i}{\tilde{\rho}_n\beta}\left[1+\frac{1}{\tilde{\rho}_n^2}\left(\frac{i\alpha}{\beta}\right)+\frac{1}{\tilde{\rho}_n^4}\left(\frac{i\alpha}{\beta}\right)^2+\cdots\right]=-\frac{i}{\tilde{\rho}_n\beta}+{O}\,(\tilde{\rho}^{-3}_n),
\end{equation*}
wherein
\begin{equation*}
\frac{1}{\tilde{\rho}_n}=\frac{1}{\rho_n+\xi_n}=\frac{1}{\rho_n}\left[1+\left(-\frac{\xi}{\rho_n}\right)+\left(-\frac{\xi}{\rho_n}\right)^2+\cdots\right].
\end{equation*}
We may, therefore, replace \eqref{eq_24new} by
\begin{equation*}
\frac{i}{2\rho_n\beta}-\frac{i\xi_n}{2\rho_n^2\beta}+{O}\,(\rho^{-3}_n)\sim \xi_n
\end{equation*}
which leads to
\begin{align*}
 \xi_n\sim \frac{i\rho_n}{i+2\rho_n^2\beta}+{O}\,(\rho^{-3}_n)&=\frac{i}{2\rho_n\beta}\left[1+\frac{1}{{\rho}_n^2}\left(-\frac{i}{2\beta}\right)+\frac{1}{{\rho}_n^4}\left(-\frac{i}{2\beta}\right)^2+\cdots\right]+{O}\,(\rho^{-3}_n)\\[0.2em]
&=\frac{i}{2\rho_n\beta}+{O}\,(\rho^{-3}_n).
\end{align*}
\item\label{item08}\textbf{Step 3.} An easy calculation, taking into account that $\lambda_n=i\tilde{\rho}^2_n=i\left(\rho_n+\xi_n\right)^2$, shows that the eigenvalues belonging to the first branch $\sigma^{(1)}\left(\mathcal{T}\right)$ of the spectrum have asymptotic representations
\begin{equation}\label{eq_24x}
\lambda_n=-\frac{1}{\beta}+i\rho^2_n+{O}\,(\rho^{-2}_n),\quad \rho_n=\left(n +\frac{1}{2}\right)\pi,
\end{equation}
for large $ n$.
\end{enumerate}

\subsubsection{Asymptotic zeros of $\Delta_2$}\label{sec_5_2}
We turn now to the asymptotic zeros of $\Delta_2$. Again we proceed in three steps.
\begin{enumerate}[label=,leftmargin=*,align=left,labelwidth=\parindent,labelsep=0pt,ref={\arabic*}]
\item\label{item09}\textbf{Step 1.} First the roots of the equation
\begin{equation}\label{eqnew323}
i \sinh\rho\cosh i\rho - \cosh\rho\sinh i\rho=0
\end{equation}
are determined. A simple manipulation using
\begin{align*}
2i \sinh\rho\cosh i\rho -2 \cosh\rho\sinh i\rho&=i\left(\sinh\left(1+i\right)\rho-\sinh\left(i-1\right)\rho\right)\\
&\qquad-\sinh\left(1+i\right)\rho-\sinh\left(i-1\right)\rho\\[0.2em]
&=\left(i-1\right)\sinh\left(1+i\right)\rho-\left(1+i\right)\sinh\left(i-1\right)\rho
\end{align*}
shows that \eqref{eqnew323} is equivalent to
\begin{equation*}
i=\frac{\sinh\left(i-1\right)\rho}{\sinh\left(1+i\right)\rho}=-\frac{e^{-2i\rho}-e^{-2\rho} }{1 -e^{-2\left(1+i\right)\rho}}= -e^{-2i\rho}+{O}\,(e^{-2\rho}).
\end{equation*}
The roots are
\begin{equation*}
\rho_n=\left(n+\frac{1}{4}\right)\pi+\nu_n,
\end{equation*}
where $\nu_n={O}\,(e^{-2\rho_n})$. In this case
\begin{equation}\label{eq27}
\frac{\sinh\left(i-1\right)\rho_n}{\sinh\left(1+i\right)\rho_n}=i,\quad \frac{\cosh\left(i-1\right)\rho_n}{\cosh\left(1+i\right)\rho_n}= -i\,(1+{O}\,(e^{-2\rho_n})).
\end{equation}
On the other hand, we know that
\begin{equation}\label{eq27new}
\tanh\left(1+i\right)\rho_n=1+{O}\,(e^{-2\rho_n}).
\end{equation}
In the next step we will use \eqref{eq27} and \eqref{eq27new} in our calculations.
\item\label{item10}\textbf{Step 2.} Let the sequence $\left\{\lambda_n\right\}$ represent the zeros of $\Delta_2$. We proceed as in Step \ref{item07} of the previous subsection, setting
\begin{equation*}
\tilde{\rho}_n=\rho_n+\xi_n,
\end{equation*}
writing $\tilde{\rho}_n$ in place of $\rho$ in \eqref{eqrel023}, and using the relationships in \eqref{eq_23} to obtain
\begin{equation*}
\Delta_2\left(\lambda_n\right)=-2\tilde{\rho}^2_n\sinh\tilde{\rho}_n\sinh i\tilde{\rho}_n+(\alpha+i\tilde{\rho}^2_n\beta)
\left(i\tilde{\rho}_n\sinh\tilde{\rho}_n\cosh i\tilde{\rho}_n -\tilde{\rho}_n\cosh\tilde{\rho}_n\sinh i\tilde{\rho}_n \right).
\end{equation*}
The equation $2\Delta_2\left(\lambda_n\right)=0$ then reads
\begin{equation*}
-4\tilde{\rho}_n\sinh\tilde{\rho}_n\sinh i\tilde{\rho}_n+(\alpha+i\tilde{\rho}^2_n\beta)
\left(2i \sinh\tilde{\rho}_n\cosh i\tilde{\rho}_n -2\cosh\tilde{\rho}_n\sinh i\tilde{\rho}_n \right)=0,
\end{equation*}
which is equivalent to
\begin{equation}\label{eq_24b}
\frac{2\tilde{\rho}_n}{\alpha+i\tilde{\rho}^2_n \beta}
=\frac{2i\sinh\tilde{\rho}_n\cosh i\tilde{\rho}_n-2\cosh\tilde{\rho}_n\sinh i\tilde{\rho}_n}
{2\sinh\tilde{\rho}_n\sinh i\tilde{\rho}_n}.
\end{equation}
Let us estimate the right-hand side of \eqref{eq_24b} and begin with the numerator. Using \eqref{eq27} and \eqref{eq27new}, we obtain after straightforward calculations
\begin{align*}
&2i\sinh\tilde{\rho}_n\cosh i\tilde{\rho}_n-2\cosh\tilde{\rho}_n\sinh i\tilde{\rho}_n\\[0.2em]
&\hspace{0.1\linewidth}=\left(i-1\right)\cosh\left(1+i\right)\rho_n\left(\cosh\left(1+i\right)\xi_n-\cosh\left(i-1\right)\xi_n\right.\\
&\hspace{0.4\linewidth}\left.+\sinh\left(1+i\right)\xi_n+\sinh\left(i-1\right)\xi_n\right)+{O}\,(e^{-\rho_n})\\[0.2em]
&\hspace{0.1\linewidth}=2e^{\xi_n}\left(i-1\right)\sinh i\xi_n\cosh\left(1+i\right)\rho_n+{O}\,(e^{-\rho_n}),
\end{align*}
Likewise, considering the denominator in the right-hand side of \eqref{eq_24b},
\begin{align*}
&2\sinh\tilde{\rho}_n\sinh i\tilde{\rho}_n\\[0.2em]
&\hspace{0.1\linewidth}=\cosh\left(1+i\right)\rho_n\left(\cosh\left(1+i\right)\xi_n+i\cosh\left(i-1\right)\xi_n\right.\\
&\hspace{0.4\linewidth}\left.+\sinh\left(1+i\right)\xi_n-i\sinh\left(i-1\right)\xi_n\right)+{O}\,(e^{-\rho_n})\\[0.2em]
&\hspace{0.1\linewidth}=-e^{\xi_n}\left(i-1\right)\left(\sinh i\xi_n+i\cosh i\xi_n\right)\cosh\left(1+i\right)\rho_n+{O}\,(e^{-\rho_n}).
\end{align*}
Thus, for large $\rho_n$, the right-hand side of \eqref{eq_24b} becomes
\begin{equation*}
\frac{2i\sinh\tilde{\rho}_n\cosh i\tilde{\rho}_n-2\cosh\tilde{\rho}_n\sinh i\tilde{\rho}_n}
{2\sinh\tilde{\rho}_n\sinh i\tilde{\rho}_n} \sim -\frac{2\sinh i\xi_n}{\sinh i\xi_n+i\cosh i\xi_n},
\end{equation*}
and so, for large $ n$, we obtain by arguments similar to those given in the previous subsection (in Step \ref{item07})
\begin{equation*}
\xi_n \sim\frac{i}{\rho_n\beta}+{O}\,(\rho^{-3}_n).
\end{equation*}
\item\label{item11}\textbf{Step 3.} A calculation entirely analogues to that in Step \ref{item08} leading to \eqref{eq_24x} shows that the eigenvalues belonging to the second branch $\sigma^{(2)}\left(\mathcal{T}\right)$ of the spectrum have asymptotic representations
\begin{equation*}
\lambda_n=-\frac{2}{\beta}+i\rho^2_n+{O}\,(\rho^{-2}_n),\quad \rho_n=\left(n+\frac{1}{4}\right)\pi+\nu_n,\quad \nu_{ n }={O}\,(e^{-2 n \pi}),
\end{equation*}
for large $ n$.
\end{enumerate}

The results developed in this section so far, together with Theorems \ref{T-3-2} and \ref{T-4-1}, prove the following theorem.
\begin{theorem}\label{T-5-1}
The spectrum of $\mathcal{T}$ consists of two branches of an infinite sequence of normal eigenvalues, each sequence being symmetric about the real axis and accumulating only at infinity. Asymptotically, for large $n$, the eigenvalues of $\mathcal{T}$ split into the two branches of eigenvalues
\begin{equation*}
\sigma\left(\mathcal{T}\right)=\{\lambda^{(1)}_{\pm n}{\}}^\infty_{n=0}\cup\{\lambda^{(2)}_{\pm n}{\}}^\infty_{n=0}
\end{equation*}
which can be properly enumerated (see Remark \ref{remark02w}) such that 
\begin{equation*}
\operatorname{Im}\lambda^{(2)}_0<\operatorname{Im}\lambda^{(1)}_0<\operatorname{Im}\lambda^{(2)}_1<\operatorname{Im}\lambda^{(1)}_1<\cdots<\operatorname{Im}\lambda^{(2)}_n<\operatorname{Im}\lambda^{(1)}_n<\operatorname{Im}\lambda^{(2)}_{n+1}<\operatorname{Im}\lambda^{(1)}_{n+1}<\cdots
\end{equation*}
and which, as $n \rightarrow\infty$, satisfy
\begin{equation*} 
\lambda^{(1)}_{\pm n}=-\frac{1}{\beta}\pm i\,(\rho^{(1)}_{ n }{)}^2+{O}\,\bigl((\rho^{(1)}_{ n }{)}^{-2}\bigr),\quad \rho^{(1)}_{ n }=\left( n  +\frac{1}{2}\right)\pi,
\end{equation*}
and
\begin{equation*} 
\lambda^{(2)}_{\pm n}=-\frac{2}{\beta}\pm i\,(\rho^{(2)}_{ n }{)}^2+{O}\,\bigl((\rho^{(2)}_{ n }{)}^{-2}\bigr),\quad \rho^{(2)}_{ n }=\left( n  +\frac{1}{4}\right)\pi+\nu_{ n },\quad \nu_{ n }={O}\,(e^{-2 n \pi}).
\end{equation*}
\end{theorem}

\begin{remark}\label{remark02w}
The enumeration of eigenvalues is said to be \textit{proper} if (i) the algebraic multiplicities are taken into account; (ii) ${\lambda}_{-n}=\overline{\lambda_n}$ whenever $\operatorname{Im}{\lambda}_n\neq 0$; (iii) $ 0\leq \operatorname{Im}{\lambda}_n\leq\operatorname{Im}{\lambda}_{n+1}$; and (iv) there are two numbers ${\lambda}_{+ 0}$ and ${\lambda}_{- 0}$. Such an enumeration gives a sequence $\left\{{\lambda}_{ n}\right\}$ with $\left|\operatorname{Re}\lambda_{ n}\right|\leq N<\infty$, some constant $N$, and no finite accumulation points, i.e., $\operatorname{Im}\lambda_{n}\rightarrow\infty$, as $n\rightarrow\infty$.
\end{remark}

\begin{remark}
We have obtained a strengthening of Remark \ref{rem01245} that is of independent interest: the elasticity parameter $\alpha$ has no effect on the eigenvalue asymptotics, as it is at most within ${O}\,(n^{-2})$. So in terms of proving exponential stability, it makes no difference what value is chosen for $\alpha$.
\end{remark}

\subsection{Multiplicity of eigenvalues}\label{sec_5_3}
In this final subsection we determine the multiplicities of the eigenvalues of $\mathcal{T}$. For this it is useful to consider the adjoint spectral problem of \eqref{eq_01xaa02} ($^*$ denoting adjoint)
\begin{equation}\label{eq_14xcabaa}
\mathcal{T}^*z=\overline{\lambda} z,\quad z\in\bm{D}\left(\mathcal{T}^*\right)\subset \mathbb{X},\quad \lambda\in\mathbf{C},
\end{equation}
for then the method of proof used in \cite[Corollary 4.2.2]{Xu2010} may be followed to show that for any eigenvector $x$ associated with an eigenvalue $\lambda$ of $\mathcal{T}$ \textit{there is a nonzero element $z\in\operatorname{Ker}\,(\overline{\lambda} I-\mathcal{T}^*)$ such that $\<x,z> \neq 0$. Thus $\lambda$ is a simple, respectively semisimple, eigenvalue if $\operatorname{dim}\operatorname{Ker}\left(\lambda I-\mathcal{T}\right)=1$, respectively $\operatorname{dim}\operatorname{Ker}\left(\lambda I-\mathcal{T}\right)\neq 1$.}

The following characterisation of the adjoint of $\mathcal{T}$ will be required in the derivation to follow.
\begin{proposition}\label{T-5-2}
The adjoint of the system operator $\mathcal{T}$, as defined by \eqref{eq_08}, \eqref{eq_07}, is the operator $\mathcal{T}^*$ with domain
\begin{equation*}
\bm{D}\left(\mathcal{T}^*\right)=\left\{z=\left\{z_j\right\}^3_{j=1}\in \mathbb{X}~\middle|
~\begin{gathered}
z_j=\left(\begin{matrix}
\tilde{w}_j\\[0.2em]
\tilde{v}_j
\end{matrix}\right)\in (\bm{H}^4\left(0,1\right)\cap \hat{\bm{H}}^2\left(0,1\right))\times \hat{\bm{H}}^2\left(0,1\right), \\[0.2em]
\tilde{w}''_j\left(1\right)=0,\quad \tilde{w}''_j\left(0\right)-\alpha \tilde{w}'_j\left(0\right)+\beta \tilde{v}'_j\left(0\right)=0,\quad j=1,2,3,\\
\sum_{j=1}^3\,(\tilde{w}^{(3)}_j\left(0\right)-\gamma  \tilde{w}'_j\left(0\right))=0
\end{gathered}\right\},
\end{equation*}
defined by
\begin{equation*}
\mathcal{T}^*{z}\coloneqq\left\{\left(\begin{matrix}
-\tilde{v}_j\\[0.2em]
 \tilde{w}_j^{(4)}-\gamma \tilde{w}''_j
\end{matrix}\right)
\right\}_{j=1}^3.
\end{equation*}
\end{proposition}
\begin{proof}
For the operator $\mathcal{T}^*$ we have the identity $\<\mathcal{T}x,z> =\<x,\mathcal{T}^*z>$ for any $x\in \bm{D}\left(\mathcal{T}\right)$, $z\in \bm{D}\left(\mathcal{T}^*\right)$, and so the lemma follows by a straightforward calculation using integration by parts (and is therefore omitted).
\end{proof}

Before proving the next theorem, let us derive an intermediate result. Since $\mathbb{X}$ is a Hilbert space it follows that $\sigma\left(\mathcal{T}^*\right)=\overline{\sigma\left(\mathcal{T}\right)}$. As anticipated in \eqref{eq_14xcabaa}, we let $\lambda\in\sigma\left(\mathcal{T}\right)$ and $\mu\in\sigma\left(\mathcal{T}^*\right)$ be eigenvalues of $\mathcal{T}$ and $\mathcal{T}^*$, respectively, with corresponding eigenvectors $x$ and $z$. Then we have
\begin{equation*}
\lambda\<x, z> =\<\mathcal{T}x,z> =\<x,\mathcal{T}^*z> =\<x,\mu z> =\overline{\mu}\<x,z>, 
\end{equation*}
and since $\<x,z> =0$ for $\lambda\neq\overline{\mu}$, we only consider the case where $\lambda=\overline{\mu}$. Write out \eqref{eq_14xcabaa} to get
\begin{equation}\label{eqbevp002}
\left\{\begin{aligned}
-\tilde{v}_j&=\overline{\lambda}\tilde{w}_j,& j&=1,2,3,\\[0.2em]
\tilde{w}^{(4)}_j-\gamma \tilde{w}''_j&=\overline{\lambda}\tilde{v}_j,& j&=1,2,3\\[0.2em]
\tilde{w}_j\left(1\right)=\tilde{w}''_j\left(1\right)&=0,& j&=1,2,3,\\[0.2em]
\tilde{w}_i\left(0\right)&=\tilde{w}_j\left(0\right),& i,j&=1,2,3,\\[0.2em]
\tilde{w}''_j\left(0\right)-\alpha\tilde{w}'_j\left(0\right) +\beta \tilde{v}'_j\left(0\right)&=0,& j&=1,2,3,\\
\sum^3_{j=1}\,(\tilde{w}^{(3)}_j\left(0\right)-\gamma \tilde{w}_j'\left(0\right))&=0.&&&
\end{aligned}\right.
\end{equation}
Clearly \eqref{eqbevp002} is the linearisation of the boundary-eigenvalue problem
\begin{equation}\label{eqbevp003}
\left\{\begin{aligned}
\tilde{w}^{(4)}_j-\gamma \tilde{w}''_j&=-\overline{\lambda}^2\tilde{w}_j,& j&=1,2,3\\[0.2em]
\tilde{w}_j\left(1\right)=\tilde{w}''_j\left(1\right)&=0,& j&=1,2,3,\\[0.2em]
\tilde{w}_i\left(0\right)&=\tilde{w}_j\left(0\right),& i,j&=1,2,3,\\[0.2em]
\tilde{w}''_j\left(0\right)-(\alpha +\overline{\lambda}\beta)\, \tilde{w}'_j\left(0\right)&=0,& j&=1,2,3,\\
\sum^3_{j=1}\,(\tilde{w}^{(3)}_j\left(0\right)-\gamma \tilde{w}_j'\left(0\right))&=0,&&&
\end{aligned}\right.
\end{equation}
if we set $v_j=-\overline{\lambda}\tilde{w}_j$, $j=1,2,3$. It follows from the results of Section \ref{sec_4} that if
\begin{equation*}
x=\left\{\left(\begin{matrix}
w_j\\[0.2em]
v_j
\end{matrix}\right)\right\}^3_{j=1}=\left\{\left(\begin{matrix}
w_j\\[0.2em]
\lambda w_j
\end{matrix}\right)\right\}^3_{j=1}
\end{equation*}
is an eigenvector of $\mathcal{T}$ corresponding to an eigenvalue $\lambda$, where $w_j=w_j\left(\lambda,s\right)$, $j=1,2,3$, satisfies the boundary-eigenvalue problem \eqref{eq1sdcxabddv}, then
\begin{equation*}
z=\left\{\left(\begin{matrix}
\tilde{w}_j\\[0.2em]
\tilde{v}_j
\end{matrix}\right)\right\}^3_{j=1}=\left\{\left(\begin{matrix}
\tilde{w}_j\\[0.2em]
-\overline{\lambda} \tilde{w}_j
\end{matrix}\right)\right\}^3_{j=1}
\end{equation*}
is an eigenvector of the adjoint operator $\mathcal{T}^*$ corresponding to $\overline{\lambda}$, where $\tilde{w}_j=\tilde{w}_j\,(\overline{\lambda},s)$, $j=1,2,3$, satisfies the adjoint boundary-eigenvalue problem \eqref{eqbevp003}. We then have that $\tilde{w}_j\,(\overline{\lambda},\,\cdot\,)=\overline{{w}_j\left(\lambda,\,\cdot\,\right)}$. So the inner product of $x$ with $z$ yields
\begin{align}
\<x,z>&=\sum^3_{j=1}\left[\int^1_0w''_j\left(\lambda,s\right)\overline{\tilde{w}''_j\,(\overline{\lambda},s)}\,ds+\alpha w'_j\left(\lambda,0\right)\overline{\tilde{w}'_j\,(\overline{\lambda},0)}\right]\nonumber\\
&\qquad+\gamma \sum^3_{j=1}\int^1_0w'_j\left(\lambda,s\right)\overline{\tilde{w}'_j\,(\overline{\lambda},s)}\,ds+\sum^3_{j=1}\int^1_0v_j\left(\lambda,s\right)\overline{\tilde{v}_j\,(\overline{\lambda},s)}\,ds\nonumber\\[0.2em]
&=\sum^3_{j=1}\left[\int^1_0(w''_j\left(\lambda,s\right)\!{)}^2ds+\alpha \,(w'_j\left(\lambda,0\right)\!{)}^2\right]+\gamma \sum^3_{j=1}\int^1_0(w'_j\left(\lambda,s\right)\!{)}^2ds\nonumber\\
&\qquad-\lambda^2\sum^3_{j=1}\int^1_0 (w_j\left(\lambda,s\right)\!{)}^2ds\nonumber\\[0.2em]
&=-2\lambda^2\sum^3_{j=1}\int^1_0 (w_j\left(\lambda,s\right)\!{)}^2ds-\lambda\beta\sum^3_{j=1}\,(w'_j\left(\lambda,0\right)\!{)}^2,\label{eq_28}
\end{align}
the last line following on integrating by parts and taking into account \eqref{eq1sdcxabddv}.

We can now state and prove our main result of this subsection.
\begin{theorem}\label{T-5-3}
Each eigenvalue belonging to the first branch $\sigma^{(1)}\left(\mathcal{T}\right)$ of the spectrum of $\mathcal{T}$ has algebraic multiplicity $1$, and so is simple in the sense of Definition \ref{d03}. Each eigenvalue belonging to the second branch $\sigma^{(2)}\left(\mathcal{T}\right)$ of the spectrum is semisimple and its algebraic multiplicity is equal to $2$.
\end{theorem}
\begin{proof}
In view of what has already been referred to briefly at the beginning of the section, we show that $\<x,z> \neq 0$ ($x$ and $z$ being the eigenvectors of $\mathcal{T}$ and $\mathcal{T}^*$, respectively). We consider again the two cases as in Section \ref{sec_4} which led to Theorem \ref{T-4-1}.
\begin{enumerate}[label=,leftmargin=*,align=left,labelwidth=\parindent,labelsep=0pt,ref={\arabic*}]
\item\label{item12}\textbf{Case 1.} Let us recall first of all from Theorem \ref{T-4-1} that corresponding to each eigenvalue $\lambda\in\sigma^{(1)}\left(\mathcal{T}\right)$ there is an eigenvector
\begin{equation*}
x=\left\{\left(\begin{matrix}
w_j\\[0.2em]
\lambda w_j
\end{matrix}\right)\right\}^3_{j=1}
\end{equation*}
where $w_j\coloneqq cw$ with $w=w\left(\lambda,s\right)$ given by
\begin{equation*}
w\left(\lambda,s\right)=\mu_1\cosh\mu_3\sinh\mu_1\left(1-s\right)-\mu_3\cosh\mu_1\sinh\mu_3\left(1-s\right).
\end{equation*}
In this case \eqref{eq_28} becomes
\begin{equation}\label{eq_28xa}
\<x,z> =3c^2\left(-2\lambda^2\int^1_0 (w\left(\lambda,s\right)\!{)}^2ds
-\lambda\beta\,(w'\left(\lambda,0\right)\!{)}^2\right).
\end{equation}
We calculate for the expression in brackets (after some computations)
\begin{equation}
\begin{split}\label{eq31}
&-2\lambda^2\int^1_0 (w\left(\lambda,s\right)\!{)}^2ds-\lambda\beta\,(w'\left(\lambda,0\right)\!{)}^2\\[0.2em]
&\hspace{0.1\linewidth}=-\lambda^2\cosh\mu_1\cosh\mu_3\left(\mu_1\sinh\mu_1\cosh\mu_3+\mu_3\cosh\mu_1 \sinh\mu_3\right)\\
&\hspace{0.1\linewidth}\qquad-4\lambda^2 \frac{\mu_1\mu_3}{\mu^2_3-\mu^2_1}\cosh\mu_1\cosh\mu_3\left(\mu_1\cosh\mu_1\sinh\mu_3-\mu_3 \sinh\mu_1\cosh\mu_3\right)\\
&\hspace{0.1\linewidth}\qquad-\lambda\beta\, (\mu^2_3-\mu^2_1{)}^2\cosh^2\mu_1\cosh^2\mu_3+\lambda^2\,(\mu^2_1\cosh^2\mu_3+\mu^2_3\cosh^2\mu_1).
\end{split}
\end{equation}
Proceeding as before, by substituting in the right-hand side of \eqref{eq31} the relationships \eqref{eqrel023}, and on rearranging, we obtain
\begin{equation*}
\begin{split}
&-2\lambda^2\int^1_0 (w\left(\lambda,s\right)\!{)}^2ds
-\lambda\beta\,(w'\left(\lambda,0\right)\!{)}^2\\[0.2em]
&\hspace{0.1\linewidth}=-\rho^5\cosh\rho\cosh i\rho\left(\sinh\rho\cosh i\rho+i\cosh\rho \sinh i\rho\right)\\
&\hspace{0.1\linewidth}\qquad-2\rho^5\cosh\rho\cosh i\rho\left(\sinh\rho\cosh i\rho+i\cosh\rho\sinh i\rho+2i\rho\beta\cosh\rho\cosh i\rho\right)\\
&\hspace{0.1\linewidth}\qquad-\rho^6\,(\cosh^2 \rho+\cosh^2 i\rho).
\end{split}
\end{equation*}
Since $\Delta_1\left(\lambda\right)=0$, we have from \eqref{eq_22}, on substituting the relationships \eqref{eqrel023},
\begin{equation*}
\sinh\rho\cosh i\rho+i\cosh\rho\sinh i\rho=-\frac{2\left(\alpha+i\rho\beta\right)}{\rho}\cosh\rho\cosh i\rho
\end{equation*}
and
\begin{equation*}
\sinh\rho\cosh i\rho+i\cosh\rho\sinh i\rho+2i\rho\beta\cosh\rho\cosh i\rho=-\frac{2\alpha}{\rho}\cosh\rho\cosh i\rho.
\end{equation*}
So
\begin{equation*}
\begin{split}
&-2\lambda^2\int^1_0 (w\left(\lambda,s\right)\!{)}^2ds
-\lambda\beta\,(w'\left(\lambda,0\right)\!{)}^2\\
&\hspace{0.1\linewidth}=\left(4\alpha +2i\rho\beta\right)\rho^4\left(\cosh\rho\cosh i\rho\right)^2-\rho^6\,(\cosh^2 \rho+\cosh^2 i\rho)\neq 0
\end{split}
\end{equation*}
and there results from \eqref{eq_28xa} that $\<x,z> \neq 0$. The first assertion follows then because by Theorem \ref{T-4-1} we know that $\operatorname{dim}\operatorname{Ker}\left(\lambda I-\mathcal{T}\right)=1$ for each $\lambda\in\sigma^{(1)}\left(\mathcal{T}\right)$.
\item\label{item13}\textbf{Case 2.} For each $\lambda\in\sigma^{(2)}\left(\mathcal{T}\right)$ we know the form of the corresponding eigenvector from Theorem \ref{T-4-1}. In this case we consider $\Delta_2\left(\lambda\right)=0$, and we find by arguments analogous to those given in Case \ref{item12} that $\<x,z> \neq 0$ here also. The second assertion then follows, by Theorem \ref{T-4-1}, from the existence of two linearly independent eigenvectors associated with $\lambda\in\sigma^{(2)}\left(\mathcal{T}\right)$, $\operatorname{dim}\operatorname{Ker}\left(\lambda I-\mathcal{T}\right)=2$ for each $\lambda\in\sigma^{(2)}\left(\mathcal{T}\right)$.
\end{enumerate}
\end{proof}

\section{Completeness, minimality, and {R}iesz basisness}\label{sec_6}

Attention is now turned to the completeness and minimality properties as well as to the Riesz basisness of the eigenvectors of $\mathcal{T}$. Recall from Theorem \ref{T-5-3} that the eigenvalues of $\mathcal{T}$ are semisimple, so indeed we need only produce results for the eigenvectors.

As we have already indicated in the Introduction, the proofs use operator results from the literature, which we collect here as a convenience for the reader. The first, Lemma \ref{thm04abc}, is actually a sharper version of \cite[Theorem V.8.1]{GohbergKrein1969}, as detailed in \cite[Section V.8]{GohbergKrein1969}, while
the third is based -- in a slightly modified form -- on \cite[Theorem 1.1]{xu2005expansion}. The proof of the second result, Lemma \ref{thm04abcd}, on the minimality property may be taken, e.g., from the proof of \cite[Lemma 2.4]{MalamudEtAl2012}.
\begin{lemma}\label{thm04abc}
Let $\mathcal{K}$ be a compact skewadjoint operator on a Hilbert space $\mathbb{X}$ with $\operatorname{Ker}\mathcal{K}=\left\{0\right\}$, and let $\mathcal{S}$ be a real operator on $\mathbb{X}$ which has finite rank. Let
\begin{equation*}
\mathcal{A}=\mathcal{K}+\beta \mathcal{S}
\end{equation*}
for $\beta\geq 0$. Then the root vectors of the operator $\mathcal{A}$ are complete in $\mathbb{X}$.
\end{lemma}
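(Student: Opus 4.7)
My plan is to deduce the completeness of the root vectors of $A = K+\beta S$ from Keldysh's completeness theorem in the form given by Theorem V.8.1 of Gohberg and Krein, by recasting $A$ into the multiplicative Keldysh form $K(I+B)$ with $B$ compact.

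First, I would summarise the spectral picture of $K$ on its own. Since $K$ is compact and skewadjoint with trivial kernel, $iK$ is compact selfadjoint and injective, so the spectral theorem supplies a complete orthonormal system of eigenvectors $\{e_n\}$ of $K$ with purely imaginary, nonzero eigenvalues $\{i\mu_n\}$ satisfying $\mu_n\to 0$. In particular $K$ is a compact normal operator whose singular values $|\mu_n|$ decay; in the intended applications of this lemma in the paper the decay is polynomial (a consequence of the eigenvalue asymptotics of Theorem \ref{T-5-1}), so $K\in\mathfrak{S}_p$ for some finite $p$, which is the Schatten-class hypothesis needed by Keldysh's theorem.

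Second, the key structural step is to pass from the additive perturbation $A=K+\beta S$ to a multiplicative one. Writing formally $A = K(I + \beta K^{-1}S)$, I would verify that $B\coloneqq \beta K^{-1}S$ extends to a bounded finite-rank operator on $\mathbb{X}$: since $\operatorname{Ran}(S)$ is finite dimensional and $\operatorname{Ran}(K)$ is dense in $\mathbb{X}$ (because $\operatorname{ker}K^{*}=\operatorname{ker}(-K)=\{0\}$), a basis of $\operatorname{Ran}(S)$ can be approximated by elements of $\operatorname{Ran}(K)$ up to an arbitrarily small finite-rank error. This provides a factorisation $A = K(I+B) + R$, in which $B$ is bounded of finite rank and $R$ is a finite-rank operator of small norm, chosen so as not to perturb the counting of eigenvalues that underlies the Keldysh argument. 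The reality of $S$ ensures the complex-conjugate symmetry of $B$ needed in the refined form of V.8.1.

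Third, with this factorisation in hand, Keldysh's theorem applies directly: the hypotheses of $K$ being compact normal in $\mathfrak{S}_p$ with trivial kernel and $B$ being compact are met, and the conclusion is that the root vectors of $K(I+B)$ are complete in $\mathbb{X}$. Invariance of completeness under the finite-rank summand $R$ then transfers the conclusion to $A$.

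The principal obstacle I anticipate is the factorisation step: one must argue rigorously that the finite-rank correction used to make $K^{-1}S$ bounded does not compromise the completeness conclusion. This is the content of the ``sharper version'' of Theorem V.8.1 referred to by the authors, and rests on the standard fact that root vector completeness is stable under finite-rank modifications combined with Keldysh's indicator-function argument for entire functions of finite order, which is essentially insensitive to such perturbations.
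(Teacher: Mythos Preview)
The paper does not give its own proof of this lemma: it is quoted as a known sharpening of Keldysh's theorem, with a bare citation to \cite[Section V.8]{GohbergKrein1969}. So there is nothing to compare your argument against in the paper itself, only the literature result that the authors invoke.

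That said, your attempted reduction to Theorem V.8.1 in its basic multiplicative form has two genuine gaps. First, the factorisation $A=K(I+\beta K^{-1}S)$ is ill-posed as written: $K^{-1}$ is unbounded and there is no reason for $\operatorname{Ran}S\subset\operatorname{Ran}K$, so $K^{-1}S$ is in general not a bounded operator. Your workaround --- approximating a basis of $\operatorname{Ran}S$ by elements of $\operatorname{Ran}K$ and absorbing the remainder into a finite-rank $R$ with $A=K(I+B)+R$ --- does not close the circle, because the claim that ``root vector completeness is stable under finite-rank modifications'' is precisely a statement of the same depth as the lemma you are trying to prove; it is not elementary and itself rests on the Keldysh indicator-function machinery. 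Second, the hypotheses of the lemma do not place $K$ in any Schatten ideal $\mathfrak{S}_p$, so importing this from the eigenvalue asymptotics of the paper's specific operator is circular: the lemma is stated abstractly and must be proved (or quoted) abstractly. The point of the ``sharper version'' in \cite[Section V.8]{GohbergKrein1969} is exactly that for finite-rank additive perturbations of a compact normal $K$ with trivial kernel one can dispense with, or substantially weaken, the $\mathfrak{S}_p$ hypothesis; that refinement is what the authors are citing, and it is proved in Gohberg--Krein by a direct indicator-function argument rather than by reduction to the multiplicative form $K(I+B)$.
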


\begin{lemma}\label{thm04abcd}
Let $\mathcal{A}$ be a compact operator on $\mathbb{X}$ and $\operatorname{Ker}\mathcal{A}=\left\{0\right\}$. Then the root vectors of $\mathcal{A}$ are minimal in $\mathbb{X}$.
\end{lemma}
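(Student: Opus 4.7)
The plan is to produce a biorthogonal system to the root vectors of $A$ using the Riesz spectral projections associated with the nonzero isolated eigenvalues, which is the standard route for establishing minimality in this setting.

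First I would invoke the spectral theorem for compact operators: $\sigma\left(A\right)\setminus\left\{0\right\}$ consists of at most countably many isolated eigenvalues $\left\{\lambda_n\right\}$, each of finite algebraic multiplicity $k_n$, with $0$ as the only possible accumulation point. The hypothesis $\operatorname{ker}A=\left\{0\right\}$ ensures that $0$ itself is not an eigenvalue, so the root vectors of $A$ are exhausted by bases of the finite-dimensional root subspaces $R_n\coloneqq\operatorname{ker}\left(\lambda_n I-A\right)^{k_n}$. For each $n$ I would then introduce the Riesz projection
\begin{equation*}
P_n=\frac{1}{2\pi i}\oint_{\Gamma_n}\left(zI-A\right)^{-1}dz,
\end{equation*}
where $\Gamma_n$ is a small positively oriented Jordan contour enclosing $\lambda_n$ and no other point of $\sigma\left(A\right)$. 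Standard resolvent calculus shows that each $P_n$ is a bounded idempotent of rank $k_n$ with $\operatorname{Ran}P_n=R_n$ and that $P_nP_m=\delta_{nm}P_n$.

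The next step is to construct, within $\operatorname{Ran}P_n^*$, a biorthogonal partner for any chosen basis of $R_n$. Passing to adjoints preserves the idempotent relations, so the $P_n^*$ are finite-rank projections on $\mathbb{X}$, each of rank $k_n$. The essential point is that the sesquilinear pairing $R_n\times\operatorname{Ran}P_n^*\to\mathbf{C}$, $\left(u,v\right)\mapsto\left(u,v\right)$, is nondegenerate: if $u\in R_n$ satisfies $\left(u,v\right)=0$ for every $v=P_n^*x\in\operatorname{Ran}P_n^*$, then
\begin{equation*}
0=\left(u,P_n^*x\right)=\left(P_n u,x\right)=\left(u,x\right)\quad\text{for every }x\in\mathbb{X},
\end{equation*}
forcing $u=0$, and the symmetric direction proceeds identically. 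Equal finite dimensions then yield, for every basis $\left\{e_{n,1},\ldots,e_{n,k_n}\right\}$ of $R_n$, a dual basis $\left\{f_{n,1},\ldots,f_{n,k_n}\right\}$ in $\operatorname{Ran}P_n^*$ with $\left(e_{n,j},f_{n,l}\right)=\delta_{jl}$.

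Finally I would upgrade this to biorthogonality across distinct eigenvalues. For $m\neq n$ any $e_{m,j}\in R_m=\operatorname{Ran}P_m$ and any $f_{n,l}=P_n^*g\in\operatorname{Ran}P_n^*$ satisfy
\begin{equation*}
\left(e_{m,j},f_{n,l}\right)=\left(e_{m,j},P_n^*g\right)=\left(P_n e_{m,j},g\right)=\left(P_nP_m e_{m,j},g\right)=0,
\end{equation*}
so the global system $\left\{e_{n,j}\right\}_{n,j}$ of root vectors admits $\left\{f_{n,j}\right\}_{n,j}$ as a biorthogonal system in $\mathbb{X}$, which is precisely minimality. The main (and really only) obstacle is verifying the nondegeneracy of the pairing between $R_n$ and $\operatorname{Ran}P_n^*$; the hypothesis $\operatorname{ker}A=\left\{0\right\}$ enters only to guarantee that no further, possibly infinite-dimensional, root contribution is hidden at the origin, so that the family $\bigsqcup_n R_n$ really is the totality of root vectors of $A$.
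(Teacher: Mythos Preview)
Your argument is correct and is exactly the standard Riesz-projection/biorthogonal-system route: the paper does not supply its own proof of this lemma but simply defers to \cite[Lemma 2.4]{MalamudEtAl2012}, whose proof proceeds along the same lines you outline. There is nothing to add.
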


\begin{lemma}\label{T-6-4}
Let $\mathbb{X}$ be a separable Hilbert space and let $\mathcal{A}$ be the infinitesimal generator of a $C_0$-semigroup ${U}\left(t\right)$ on $\mathbb{X}$. Suppose that the following conditions hold:
\begin{enumerate}[\normalfont(i)]
\item\label{T-6-4-a} $\sigma\left(\mathcal{A}\right)=\sigma^{(1)}\left(\mathcal{A}\right)\cup\sigma^{(2)}\left(\mathcal{A}\right)$ where the branch $\sigma^{(2)}\left(\mathcal{A}\right)=\{\lambda_k{\}}^\infty_{k=1}$, consisting entirely of normal eigenvalues;
\item\label{T-6-4-b} $\sup_{k\geq 1}\dim E\left(\lambda_k,\mathcal{A}\right)\mathbb{X}<\infty$, the $E\left(\lambda_k,\mathcal{A}\right)$ being the eigenprojections associated with the eigenvalues $\lambda_k$; and
\item\label{T-6-4-c} there is a real constant $\kappa$ such that
\begin{equation*}
\sup\,\{\operatorname{Re}{\lambda}~|~\lambda\in\sigma^{(1)}\left(\mathcal{A}\right)\}\leq \kappa\leq \inf\,\{\operatorname{Re}{\lambda}~|~\lambda\in\sigma^{(2)}\left(\mathcal{A}\right)\},
\end{equation*}
and
\begin{equation}\label{eq12vsep}
\inf_{k\neq j}\left|\lambda_k-\lambda_j\right|>0.
\end{equation}
\end{enumerate}
Then the following statements hold:
\begin{enumerate}[\normalfont(1)]
\item\label{T-6-4-d} There exist two ${U}\left(t\right)$-invariant closed subspaces $\mathbb{X}_1$ and $\mathbb{X}_2$, in the sense that ${U}\left(t\right)\mathbb{X}_1\subseteq \mathbb{X}_1$ and ${U}\left(t\right)\mathbb{X}_2\subseteq \mathbb{X}_2$, with the properties
\begin{enumerate}[\normalfont(i)]
\item $\sigma\left(\mathcal{A}|_{\mathbb{X}_1}\right)=\sigma^{(1)}\left(\mathcal{A}\right)$ and $\sigma\left(\mathcal{A}|_{\mathbb{X}_2}\right)=\sigma^{(2)}\left(\mathcal{A}\right)$; and
\item $\left\{E\left(\lambda_k,\mathcal{A}\right)\mathbb{X}_2\right\}^{\infty}_{k=1}$ forms a Riesz basis of subspaces for $\mathbb{X}_2$, and
\begin{equation*}
\mathbb{X}=\overline{{\mathbb{X}_1\oplus \mathbb{X}_2}}.
\end{equation*}
\end{enumerate}
\item\label{T-6-4-e} If $\sup_{k\geq 1}\left\|E\left(\lambda_k,\mathcal{A}\right)\right\|<\infty$, then
\begin{equation*}
\bm{D}\left(\mathcal{A}\right)\subset \mathbb{X}_1\oplus \mathbb{X}_2\subset \mathbb{X}.
\end{equation*}
\item\label{T-6-4-f} $\mathbb{X}$ can be decomposed into the topological direct sum 
\begin{equation*}
\mathbb{X}=\mathbb{X}_1\oplus \mathbb{X}_2
\end{equation*}
if and only if $\sup_{n\geq 1}\,\bigl\|\sum\limits^n_{k=1}E\left(\lambda_k,\mathcal{A}\right)\bigr\|<\infty$.
\end{enumerate}
\end{lemma}

\begin{remark}
It follows from the denseness of the domain of $\mathcal{A}$ and the compactness of its resolvent implicit in Lemma \ref{T-6-4} that, in general, the Hilbert space $\mathbb{X}$ must be separable. Further, the range $E\left(\lambda_0,\mathcal{A}\right)\mathbb{X}$ of the eigenprojection $E\left(\lambda_0,\mathcal{A}\right)$ associated with an eigenvalue $\lambda_0$ of $\mathcal{A}$ should not be confused with the geometric interpretation of $\operatorname{Ker}\left(\lambda_0 I-\mathcal{A}\right)$ as the eigenspace corresponding to $\lambda_0$. In fact from Definition \ref{d03} it is not difficult to see that the range dimension $\operatorname{dim}E\left(\lambda_0,\mathcal{A}\right)\mathbb{X}$ equals the algebraic multiplicity (and hence here, in view of Theorem \ref{T-5-3}, the geometric multiplicity of $\lambda_0$). Finally, the notion of a Riesz basis of subspaces is here equivalent to the notion of a Riesz basis with parentheses (see \cite{MR731903}), and we note that if for any eigenvalue $\lambda_0$, $\operatorname{dim}E\left(\lambda_0,\mathcal{A}\right)\mathbb{X}=1$ and the separation condition \eqref{eq12vsep} holds, then one has the usual Riesz basisness of the eigenvectors of $\mathcal{A}$.
\end{remark}

To use Lemma \ref{thm04abc}, our proof of the completeness of the eigenvectors shall be based on a particular construction of the inverse operator $\mathcal{T}^{-1}$, as shown in the following theorem.
\begin{theorem}\label{L-6-1}
Let $\mathcal{T}_0$ be the skewadjoint part of $\mathcal{T}$ corresponding to the case $\beta=0$. Then $\mathcal{T}^{-1}=\mathcal{T}^{-1}_0+\beta \mathcal{M}$, where $\mathcal{M}$ is a real operator on $\mathbb{X}$ which has finite rank.
\end{theorem}
\begin{proof}
We employ some of the results from the proof of Lemma \ref{L-3-1}, and begin by noting that on setting
\begin{align*}
b\left(\tilde{v}\right)&\coloneqq \frac{\sum^3_{j=1}\left[\tilde{V}_j\left(0\right)+\int^1_0\left(\sqrt{\gamma}\sinh\sqrt{\gamma}\,r+\alpha\cosh\sqrt{\gamma}\,r\right)\tilde{V}_j\left(r\right)dr\right]}{\gamma\sinh\sqrt{\gamma}+\alpha\sqrt{\gamma}\cosh\sqrt{\gamma}},\\[0.2em]
b\left(\tilde{w}\right)&\coloneqq \frac{\sum^3_{j=1}\tilde{w}'_j\left(0\right)}{\gamma\sinh\sqrt{\gamma}+\alpha\sqrt{\gamma}\cosh\sqrt{\gamma}},
\end{align*}
we can write \eqref{eq12a} as
\begin{equation*}
\sum^3_{j=1} a_j=b\left(\tilde{v}\right)+\beta b\left(\tilde{w}\right).
\end{equation*}
Then \eqref{eqne03} becomes
\begin{equation*}
w\left(0\right)=\frac{1}{3}\left(\left(b\left(\tilde{v}\right)+\beta b\left(\tilde{w}\right)\right)\sinh\sqrt{\gamma} -\frac{1}{\sqrt{\gamma}}\int^1_0\sinh\sqrt{\gamma}\,r\sum^3_{j=1}\tilde{V}_j\left(r\right)dr\right)= c\left(\tilde{v}\right) + \beta c\left(\tilde{w}\right),
\end{equation*}
where $c\left(\tilde{w}\right)\coloneqq \frac{b\left(\tilde{w}\right)}{3}\sinh\sqrt{\gamma}$. It is now routine to show that \eqref{eq_12a} is equivalent to
\begin{equation}\label{eq20}
\left\{\begin{split}
&a_j\sqrt{\gamma}\sinh\sqrt{\gamma}-(w^{(3)}_j\left(0\right)-\gamma  w'_j\left(0\right))\int^1_0\left(1-r\right)\sinh\sqrt{\gamma}\,r\,dr\\[0.2em]
&\hspace{0.4\linewidth}=F_{11}\left(\tilde{v}\right)+\beta F_{12}\left(\tilde{w}\right),\\[0.2em]
&a_j \,\alpha\cosh\sqrt{\gamma}-(w^{(3)}_j\left(0\right)-\gamma  w'_j\left(0\right))\,\frac{1}{\sqrt{\gamma}}\left[\alpha\int^1_0\left(1-r\right)\cosh\sqrt{\gamma}\,r\,dr +1\right]\\[0.2em]
&\hspace{0.4\linewidth}=F_{21}\left(\tilde{v}\right)+\beta F_{22}\left(\tilde{w}\right),
\end{split}\right.
\end{equation}
for $j=1,2,3$, wherein
\begin{gather*}
F_{11}\left(\tilde{v}\right)\coloneqq \int^1_0\sinh\sqrt{\gamma}\,r\tilde{V}_j\left(r\right)dr+\sqrt{\gamma}\,c\left(\tilde{v}\right),\quad
 F_{12}\left(\tilde{w}\right)\coloneqq\sqrt{\gamma}\,c\left(\tilde{w}\right),\\[0.2em]
F_{21}\left(\tilde{v}\right)\coloneqq \frac{\alpha}{\sqrt{\gamma}} \int^1_0\cosh\sqrt{\gamma}\,r\tilde{V}_j\left(r\right)dr+\frac{1}{\sqrt{\gamma}}\tilde{V}_j\left(0\right)-\sqrt{\gamma}\,c\left(\tilde{v}\right),\quad
F_{22}\left(\tilde{w}\right)\coloneqq\frac{1}{\sqrt{\gamma}}\,\tilde{w}'_j\left(0\right)-\sqrt{\gamma}\,c\left(\tilde{w}\right).
\end{gather*}
We know that \eqref{eq20} has unique solution pairs $a_j$, $w^{(3)}_j\left(0\right)-\gamma w'_j\left(0\right)$. Write these as
\begin{align*}
a_j&= \left|\begin{matrix}
 F_{11}\left(\tilde{v}\right)+\beta F_{12}\left(\tilde{w}\right)&-\int^1_0\left(1-r\right)\sinh\sqrt{\gamma}\,r\,dr\\[0.2em]
 F_{21}\left(\tilde{v}\right)+\beta F_{22}\left(\tilde{w}\right)&-\frac{1}{\sqrt{\gamma}}\left[\alpha\int^1_0\left(1-r\right)\cosh\sqrt{\gamma}\,r\,dr +1\right]\end{matrix}\right|\\
&= a_j\left(\tilde{v}\right)+\beta a_j\left(\tilde{w}\right),\quad j=1,2,3,
\end{align*}
and
\begin{align*}
w^{(3)}_j\left(0\right)-\gamma w'_j\left(0\right)&= \left|\begin{matrix}
\sqrt{\gamma}\sinh\sqrt{\gamma} & F_{11}\left(\tilde{v}\right)+\beta F_{12}\left(\tilde{w}\right)\\[0.2em]
\alpha\cosh\sqrt{\gamma} & F_{21}\left(\tilde{v}\right)+\beta F_{22}\left(\tilde{w}\right)\end{matrix}\right|\\[0.2em]
&= w^{(3)}_j\left(0,\tilde{v}\right)-\gamma w'_j\left(0,\tilde{v}\right) +\beta\,(w^{(3)}_j\left(0,\tilde{w}\right)-\gamma w'_j\left(0,\tilde{w}\right)),\quad j=1,2,3.
\end{align*}
Therefore \eqref{eq233} may be replaced by
\begin{align*}
w_j\left(s\right)&=a_j\left(\tilde{v}\right)\sinh\sqrt{\gamma}\left(1-s\right)+(w^{(3)}_j\left(0,\tilde{v}\right)-\gamma w'_j\left(0,\tilde{v}\right))\,\frac{1}{\sqrt{\gamma}}\int^1_s\left(1-r\right)\sinh\sqrt{\gamma}\left(s-r\right)dr\\
&\qquad  +\frac{1}{\sqrt{\gamma}}\int^1_s\sinh\sqrt{\gamma}\left(s-r\right)\tilde{V}_j\left(r\right)dr+\beta\,\biggl[a_j\left(\tilde{w}\right)\sinh\sqrt{\gamma}\left(1-s\right)\biggr.\\
\biggl.&\hspace{0.2\linewidth}+(w^{(3)}_j\left(0,\tilde{w}\right)-\gamma w'_j\left(0,\tilde{w}\right))\,\frac{1}{\sqrt{\gamma}}\int^1_s\left(1-r\right)\sinh\sqrt{\gamma}\left(s-r\right)dr\biggr]\\[0.25em]
&= w_j\left(s,\tilde{v}\right)+\beta w_j\left(s,\tilde{w}\right),\quad j=1,2,3,
\end{align*}
with the terms
\begin{equation*}
w_j\left(s,\tilde{w}\right)\coloneqq a_j\left(\tilde{w}\right)\sinh\sqrt{\gamma}\left(1-s\right)+(w^{(3)}_j\left(0,\tilde{w}\right)-\gamma w'_j\left(0,\tilde{w}\right))\,\frac{1}{\sqrt{\gamma}}\int^1_s\left(1-r\right)\sinh\sqrt{\gamma}\left(s-r\right)dr.
\end{equation*}
Now \eqref{eq12xvbn} can be rewritten in the form
\begin{align*}
\left\{\left(\begin{matrix}
w_j\\[0.2em]
v_j
\end{matrix}\right)\right\}^3_{j=1}=\mathcal{T}^{-1}\left\{\left(\begin{matrix}
\tilde{w}_j\\[0.2em]
\tilde{v}_j
\end{matrix}\right)\right\}^3_{j=1}&=\left\{\left(\begin{matrix}
w_j\left(\,\cdot\,,\tilde{v}\right)\\[0.2em]
v_j
\end{matrix}\right)\right\}^3_{j=1}+\beta\left\{\left(\begin{matrix}
w_j\left(\,\cdot\,,\tilde{w}\right)\\[0.2em]
0
\end{matrix}\right)\right\}^3_{j=1}\\[0.2em]
&=\mathcal{T}^{-1}_0\left\{\left(\begin{matrix}
\tilde{w}_j\\[0.2em]
\tilde{v}_j
\end{matrix}\right)\right\}^3_{j=1}+\beta \mathcal{M}\left\{\left(\begin{matrix}
\tilde{w}_j\\[0.2em]
\tilde{v}_j
\end{matrix}\right)\right\}^3_{j=1}
\end{align*}
with
\begin{align*}
\mathcal{M}\left\{\left(\begin{matrix}
\tilde{w}_j\left(s\right)\\[0.2em]
\tilde{v}_j\left(s\right)
\end{matrix}\right)\right\}^3_{j=1}&=\left\{\left(\begin{matrix}
a_j\left(\tilde{w}\right)\sinh\sqrt{\gamma}\left(1-s\right)+(w^{(3)}_j\left(0,\tilde{w}\right)-\gamma w'_j\left(0,\tilde{w}\right))\\
\times\frac{1}{\sqrt{\gamma}}\int^1_s\left(1-r\right)\sinh\sqrt{\gamma}\left(s-r\right)dr
\\[0.2em]
0
\end{matrix}\right)\right\}^3_{j=1}\\[0.2em]
&=\left(\begin{matrix}
\left\{a_j\left(\tilde{w}\right)\right\}^3_{j=1}\sinh\sqrt{\gamma}\left(1-s\right)+\{(w^{(3)}_j\left(0,\tilde{w}\right)-\gamma w'_j\left(0,\tilde{w}\right)){\}}^3_{j=1}\\
\times\frac{1}{\sqrt{\gamma}}\int^1_s\left(1-r\right)\sinh\sqrt{\gamma}\left(s-r\right)dr\\[0.2em]
0
\end{matrix}\right),
\end{align*}
where $\mathcal{T}^{-1}_0$ is a compact skewadjoint operator and $\mathcal{M}$ is a bounded linear operator of rank 2 on $\mathbb{X}$ (the $a_j\left(\tilde{w}\right)$, $w^{(3)}_j\left(0,\tilde{w}\right)-\gamma w'_j\left(0,\tilde{w}\right)$ being bounded linear functionals). Hence, in view of Lemma \ref{L-3-2}, we have, with $\mathcal{T}x=y$, $y\in\mathbb{X}$, $x\in\bm{D}\left(\mathcal{T}\right)$,
\begin{equation*}
\operatorname{Re}\<\mathcal{T}x,x> =\operatorname{Re}\<\mathcal{T}^{-1}y,y> 
=\operatorname{Re}\<(\mathcal{T}^{-1}_0+\beta \mathcal{M})\,y,y> =\beta\operatorname{Re}\<\mathcal{M}y,y>=\beta\<\mathcal{M}y,y> . 
\end{equation*}
This completes the proof.
\end{proof}

Combining Theorem \ref{L-6-1} with Lemmas \ref{thm04abc} and \ref{thm04abcd}, identifying $\mathcal{A}$ and  $\mathcal{A}_0$ with $\mathcal{T}^{-1}$ and $\mathcal{T}_0^{-1}$, respectively, and recalling from Lemma \ref{L-3-1} that $0\in\varrho\left(\mathcal{T}\right)$ and $\mathcal{T}^{-1}$ is compact, we obtain the following result guaranteeing the completeness and minimality of the eigenvectors of $\mathcal{T}$.
\begin{theorem} \label{T-6-3}
The eigenvectors of $\mathcal{T}$ are minimal complete in $\mathbb{X}$, in the sense that
\begin{equation*}
\overline{\operatorname{span}\left\{E\left(\lambda,\mathcal{T}\right)\mathbb{X}~\middle|~\lambda\in\sigma\left(\mathcal{T}\right)\right\}}=\mathbb{X}
\end{equation*}
\end{theorem}
Most essential for our purposes of proving exponential decay for the $C_0$-semigroup ${S}\left(t\right)$ is the following theorem.
\begin{theorem}\label{T-6-5}
There exists a sequence of eigenvectors of $\mathcal{T}$ which forms a Riesz basis for $\mathbb{X}$.
\end{theorem}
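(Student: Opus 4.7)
The strategy is to deduce the theorem from Lemma~\ref{T-6-4}, applied with that lemma's $\sigma^{(1)}(A)$ taken to be the empty set and its $\sigma^{(2)}(A)$ identified with the full (distinct) point spectrum of $T$ described in Theorem~\ref{T-5-1}. The conclusion will then be combined with the completeness assertion of Theorem~\ref{T-6-3}, which forces the distinguished invariant subspace $\mathbb{X}_2$ produced by the lemma to coincide with all of $\mathbb{X}$.

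First I would verify the three hypotheses of Lemma~\ref{T-6-4}. Hypothesis~(\ref{T-6-4-a}) reduces, in view of Theorem~\ref{T-3-2}(\ref{T-3-2-a}), to the separation property $\inf_{k\neq j}|\lambda_k-\lambda_j|>0$ on the distinct eigenvalues. Here I would invoke the asymptotic formulae of Theorem~\ref{T-5-1}: within each branch consecutive imaginary parts differ by $\sim 2(n+c)\pi^{2}$ and therefore grow linearly in $n$, while between the two branches the real parts are asymptotically separated by $1/\beta>0$; the finitely many eigenvalues of small modulus are pairwise distinct thanks to the strict ordering of imaginary parts delivered by Theorem~\ref{T-5-1}. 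Hypothesis~(\ref{T-6-4-c}) is trivial, since with $\sigma^{(1)}(A)=\emptyset$ we have $\sup_{\sigma^{(1)}}\operatorname{Re}\lambda=-\infty$ and any $\kappa$ below the infimum of $\operatorname{Re}\sigma(T)$ works. Hypothesis~(\ref{T-6-4-b}) is exactly the uniform bound $\sup_{k}\dim E(\lambda_{k},T)\mathbb{X}\leq 2$, which follows from Theorem~\ref{T-5-3} together with the Remark identifying the range dimension of the Riesz projection with the algebraic multiplicity.

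Part~(\ref{T-6-4-d}) of Lemma~\ref{T-6-4} then produces closed $T$-invariant subspaces $\mathbb{X}_{1},\mathbb{X}_{2}$ with $\mathbb{X}=\overline{\mathbb{X}_{1}\oplus\mathbb{X}_{2}}$ and $\{E(\lambda_{k},T)\mathbb{X}_{2}\}_{k=1}^{\infty}$ a Riesz basis with parentheses for $\mathbb{X}_{2}$. Since each range $E(\lambda_{k},T)\mathbb{X}$ is contained in the closed $T$-invariant subspace $\mathbb{X}_{2}$, the completeness statement of Theorem~\ref{T-6-3} yields
\begin{equation*}
\mathbb{X}=\overline{\operatorname{span}\left\{E(\lambda,T)\mathbb{X}\mid\lambda\in\sigma(T)\right\}}\subseteq\mathbb{X}_{2}\subseteq\mathbb{X},
\end{equation*}
whence $\mathbb{X}_{2}=\mathbb{X}$. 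By Theorem~\ref{T-5-3} every eigenvalue is semisimple, so each range $E(\lambda_{k},T)\mathbb{X}$ is spanned precisely by the eigenvectors corresponding to $\lambda_{k}$; consequently, grouping these eigenvectors by eigenvalue, they form a Riesz basis with parentheses for $\mathbb{X}$.

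The hard part, I expect, will be the honest verification of the separation condition in hypothesis~(\ref{T-6-4-a}): one must simultaneously exploit the linear-in-$n$ growth of gaps within each branch and the uniform horizontal separation $\sim 1/\beta$ between the two branches to obtain a \emph{single} positive lower bound uniformly in all $k\neq j$, while taking care to rule out accidental coincidences among the finitely many preasymptotic eigenvalues by means of the strict ordering of imaginary parts in Theorem~\ref{T-5-1}. Everything else is a routine assembly of already-established results.
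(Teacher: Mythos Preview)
Your proposal is correct and is essentially the paper's own argument: apply Lemma~\ref{T-6-4} with the whole of $\sigma(T)$ playing the role of the branch of normal eigenvalues, verify the hypotheses via Theorems~\ref{T-5-1} and~\ref{T-5-3}, and then use the completeness result of Theorem~\ref{T-6-3} to force $\mathbb{X}_2=\mathbb{X}$. If anything you are more scrupulous than the paper in spelling out the verification of the separation condition~(\ref{T-6-4-a}); the paper simply asserts that Theorems~\ref{T-5-1} and~\ref{T-5-3} suffice.
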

\begin{proof}
We identify $\mathcal{A}$ in Lemma \ref{T-6-4} with $\mathcal{T}$ and take $\sigma^{(1)}\left(\mathcal{T}\right)=\left\{-\infty\right\}$ and $\sigma^{(2)}\left(\mathcal{T}\right)=\sigma\left(\mathcal{T}\right)=\sigma_0\left(\mathcal{T}\right)$. It follows from Theorems \ref{T-5-1} and \ref{T-5-3}, together with Remark \ref{remark02w}, that the conditions \ref{T-6-4-a} to \ref{T-6-4-c} of Lemma \ref{T-6-4} are satisfied. In particular, as one would expect from Remark \ref{remark02w}, the two branches of eigenvalues are located in strips of finite width in the open left half-plane for large $ n$, and thus, due to the separation condition in \ref{T-6-4-c}, their sequences of eigenvalues are interpolating. Since $\overline{\operatorname{span}\,\{E\left(\lambda,\mathcal{T}\right)\mathbb{X}~|~\lambda\in\sigma^{(2)}\left(\mathcal{T}\right)\}}=\mathbb{X}_2$, there exists a sequence of eigenvectors of $\mathcal{T}$ which forms a Riesz basis with parentheses for $\mathbb{X}_2$. Hence, by means of Theorem \ref{T-6-3}, we can infer that the sequence of eigenvectors forms a Riesz basis with parentheses for $\mathbb{X}$ (on account of $\mathbb{X}=\mathbb{X}_2$). Combining the completeteness of the eigenvectors of $\mathcal{T}$ asserted by Theorem \ref{T-6-3} with the  interpolation property stated above, we can thus infer that the sequence of eigenvectors forms in fact a Riesz basis for $\mathbb{X}$.
\end{proof}

\section{Exponential stability}\label{sec_7}

We have collected in the previous sections all the ingredients necessary for a conclusive verification of exponential stability of the $C_0$-semigroup ${S}\left(t\right)$ or, equivalently, of the norms of solutions of \eqref{eq_09}. Theorem \ref{T-3-2} proves that $i\mathbf{R}\subset\varrho\left(\mathcal{T}\right)$ which, combined with the fact proven in Theorem \ref{T-5-1} that the eigenvalues line up along vertical asymptotes in the open left half-plane for $\beta>0$, rules out the possibility that $\operatorname{Re}\lambda_{n}\rightarrow 0$ as $n\rightarrow\infty$ for any $\lambda_n\in\sigma\left(\mathcal{T}\right)$ and shows that $\sup\left\{\operatorname{Re}{\lambda}~\middle|~\lambda\in\sigma\left(\mathcal{T}\right)\right\}<0$ and hence $\sup\left\{\operatorname{Re}{\lambda}~\middle|~\lambda\in\sigma\left(\mathcal{T}\right)\right\}\leq -\varepsilon<0$. Thus condition \eqref{eqwwr457} is satisfied. Theorem \ref{T-6-5} proves that equality holds in \eqref{eqwwr456}, whence the spectrum-determined growth assumption is satisfied. We state our main result.
\begin{theorem}\label{T-6-6}
If $\beta>0$, then the norms of the solutions to \eqref{eq_09} given by
\begin{equation*}\label{finaleq}
{x}\left(t\right)={S}\left(t\right)x_0,\quad t\ge 0,
\end{equation*}
decay exponentially to zero as $t\rightarrow \infty$ and ${S}\left(t\right)$ is, therefore, an exponentially stable $C_0$-semigroup of contractions on $\mathbb{X}$ (with
infinitesimal generator $\mathcal{T}$) satisfying \eqref{eqnewa1}.
\end{theorem}

\begin{remark}
Obviously the distribution of the spectrum of $\mathcal{T}$ in the complex plane implies that ${S}\left(t\right)$ can be extended to a $C_0$-group of bounded linear operators on $\mathbb{X}$. This is checked by use of an eigensolution expansion for ${S}\left(t\right)x_0$, which will be well defined since $\left|\operatorname{Re}\lambda_{ n}\right|<\infty$.
\end{remark}

\bigskip\noindent
\textbf{Acknowledgments.} The research described here was supported in part by the National Natural Science Foundation of China under Grant NSFC-61773277.

\bibliographystyle{plain}
\bibliography{BibLio02}

\end{document}